\numberwithin{equation}{section}
\numberwithin{figure}{section}
\theoremstyle{plain}
\newtheorem{thm}{\protect\theoremname}
  \theoremstyle{plain}
  \newtheorem{lem}[thm]{\protect\lemmaname}
  \theoremstyle{plain}
  \newtheorem{prop}[thm]{\protect\propositionname}
  \theoremstyle{plain}
  \newtheorem{cor}[thm]{\protect\corollaryname}
  \providecommand{\corollaryname}{Corollary}
  \providecommand{\lemmaname}{Lemma}
  \providecommand{\propositionname}{Proposition}
\providecommand{\theoremname}{Theorem}
\begin{document}

\title[Improved Jensen-type inequalities via linear interpolation]{Improved Jensen-type inequalities via linear interpolation and applications
}

\author{Daeshik Choi}
\author{Mario Krni\' c}
\author{Josip Pe\v cari\' c}
\keywords{convex function, Jensen inequality, Young inequality, Kantorovich constant, Specht
ratio, arithmetic mean, geometric mean, Heinz mean.}

\subjclass[2010]{26D15, 26A51, 15A45.}

\address{Southern Illinois University, Edwardsville, Dept. of Mathematics
and Statistics, Box 1653, Edwardsville, IL 62026.}
\email{dchoi@siue.edu}
\address{University of Zagreb, Faculty of Electrical Engineering and Computing,  Unska 3, 10000 Zagreb, Croatia}
\email{mario.krnic@fer.hr}
\address{University of Zagreb, Faculty of Textile Technology,  Prilaz baruna Filipovi\' ca 28a, 10000 Zagreb, Croatia}
\email{pecaric@element.hr}
\begin{abstract}
In this paper we develop a general method for improving Jensen-type  inequalities for convex and, even more generally, for piecewise convex functions. Our main result relies
on the linear interpolation of a convex function. As a consequence, we obtain improvements of some recently established Young-type inequalities. Finally, our method is also applied to matrix case.
In such a way we obtain improvements of some important matrix inequalities known from the literature.
\end{abstract}

\maketitle

\section{Introduction}

The classical Young inequality, or the arithmetic-geometric mean inequality, states that
\begin{equation}
(1-v)a+vb\geq a^{1-v}b^{v},\label{eq:Young_ineq}
\end{equation}where  $a,b>0$ and $0\leq v\leq1$.
Refining this inequality and its reverse has taken the attention of
numerous researchers. Kittaneh and Manasrah \cite{Kittaneh_Manasrah},
improved (\ref{eq:Young_ineq}) to
\begin{equation}
(1-v)a+vb\geq a^{1-v}b^{v}+r_{0}(v)(\sqrt{a}-\sqrt{b})^{2},\label{eq:Kittaneh_Mana1}
\end{equation}
where $r_{0}(v)=\min\{v,1-v\}$. Moreover, Zhao and Wu \cite{Zhao_Wu}, established
even more accurate improvement:
\begin{eqnarray}
 &  & (1-v)a+vb\geq a^{1-v}b^{v}+r_{0}(v)(\sqrt{a}-\sqrt{b})^{2}\label{eq:zhao_Wu1}\\
 &  & \qquad+r_{1}(v)\left[\left(\sqrt{a}-\sqrt[4]{ab}\right)^{2}\chi_{(0,\frac{1}{2})}(v)+\left(\sqrt[4]{ab}-\sqrt{b}\right)^{2}\chi_{(\frac{1}{2},1)}(v)\right],\nonumber
\end{eqnarray}
where $r_{1}(v)=\min\{2r_{0}(v),1-2r_{0}(v)\}$ and $\chi_{I}(v)$
stands for the characteristic function of an interval $I$, defined by $\chi_{I}(v)=\begin{cases}
1, & v\in I\\
0, & v\notin I
\end{cases}$.

On the other hand, the reverses of inequalities (\ref{eq:Kittaneh_Mana1}) and (\ref{eq:zhao_Wu1})
read as follows \cite{Kitta_M2,Zhao_Wu}:
\begin{equation}
(1-v)a+vb\leq a^{1-v}b^{v}+R_{0}(v)(\sqrt{a}-\sqrt{b})^{2}\label{eq:Kitta_Ma_reverse}
\end{equation}
and
\begin{eqnarray}
 &  & (1-v)a+vb\leq a^{1-v}b^{v}+R_{0}(v)(\sqrt{a}-\sqrt{b})^{2}\label{eq:zhao_Wu_rev}\\
 &  & \qquad-r_{1}(v)\left[\left(\sqrt{b}-\sqrt[4]{ab}\right)^{2}\chi_{(0,\frac{1}{2})}(v)+\left(\sqrt[4]{ab}-\sqrt{a}\right)^{2}\chi_{(\frac{1}{2},1)}(v)\right],\nonumber
\end{eqnarray}
where $R_{0}(v)=1-r_{0}(v)$.

Other types of improvements of the Young inequality have been studied
in  numerous recent papers. For example, Wu and Zhao \cite{Wu_Zhao}, showed a pair of relations
\begin{eqnarray}
(1-v)a+vb & \geq & K_{1}(a,b)^{r_{1}(v)}a^{1-v}b^{v}+r_{0}(v)(\sqrt{a}-\sqrt{b})^{2},\label{eq:Wu}\\
(1-v)a+vb & \leq & K_{1}(a,b)^{-r_{1}(v)}a^{1-v}b^{v}+R_{0}(v)(\sqrt{a}-\sqrt{b})^{2},\nonumber
\end{eqnarray}
 where $K_{1}(a,b)=\frac{\left(\sqrt{a}+\sqrt{b}\right)^{2}}{4\sqrt{ab}}$.
Recently, Liao and Wu \cite{Liao_Wu}, have proven the inequalities
\begin{eqnarray}
 &  & (1-v)a+vb\geq K_{2}(a,b)^{r_{2}(v)}a^{1-v}b^{v}+r_{0}(v)(\sqrt{a}-\sqrt{b})^{2}\label{eq:Zhao_Wu-1}\\
 &  & \hfill\hphantom{K_{2}(a,b)^{r_{2}}a^{1-v}b^{v}}+r_{1}(v)\left[(\sqrt[4]{ab}-\sqrt{a})^{2}\chi_{(0,\frac{1}{2})}(v)+(\sqrt{b}-\sqrt[4]{ab})^{2}\chi_{(\frac{1}{2},1)}(v)\right],\nonumber \\
 &  & (1-v)a+vb\leq K_{2}(a,b)^{-r_{2}(v)}a^{1-v}b^{v}+R_{0}(v)(\sqrt{a}-\sqrt{b})^{2}\nonumber \\
 &  & \hfill\hphantom{K_{2}(a,b)^{r_{2}}a^{1-v}b^{v}}-r_{1}(v)\left[(\sqrt[4]{ab}-\sqrt{b})^{2}\chi_{(0,\frac{1}{2})}(v)+(\sqrt{a}-\sqrt[4]{ab})^{2}\chi_{(\frac{1}{2},1)}(v)\right],\nonumber
\end{eqnarray}
where $r_{2}(v)=\min\{2r_{1}(v),1-2r_{1}(v)\}$ and $K_{2}(a,b)=\frac{\left(\sqrt[4]{a}+\sqrt[4]{b}\right)^{2}}{4\sqrt[4]{ab}}$.
The constants of the form $\frac{(M+m)^{2}}{4Mm}$ are called Kantorovich
constants.

Further, Dragomir \cite{Dragomir}, showed the following pair of inequalities that hold for any
 $a,b>0$ and $0\leq v\leq1$:
\begin{eqnarray}
(1-v)a+vb & \geq & a^{1-v}b^{v}+\frac{1}{2}v(1-v)\left(\ln\frac{b}{a}\right)^{2}\min\{a,b\},\label{eq:Dragomir1}\\
(1-v)a+vb & \leq & a^{1-v}b^{v}+\frac{1}{2}v(1-v)\left(\ln\frac{b}{a}\right)^{2}\max\{a,b\}.\nonumber
\end{eqnarray}
Meanwhile, assuming $a,b\geq1$ and $0\leq v\leq1$, Minculete \cite{Mincluete},
proved that
\begin{eqnarray}
(1-v)a+vb & \geq & a^{1-v}b^{v}+r_{0}(v)(\sqrt{a}-\sqrt{b})^{2}+\alpha(v)(\ln\frac{b}{a})^{2},\label{eq:Mincule_1}\\
(1-v)a+vb & \leq & a^{1-v}b^{v}+R_{0}(v)(\sqrt{a}-\sqrt{b})^{2}+\alpha(v)\left(\ln\frac{b}{a}\right)^{2},\nonumber
\end{eqnarray}
where
\[
\alpha(v)=\frac{1}{2}v(1-v)-\frac{1}{4}r_{0}(v)=\frac{1}{4}r_{0}(v)|2v-1|.
\]
Finally, utilizing the Specht ratio
\[
S(t)=\frac{t^{1/(t-1)}}{e\ln t^{1/(t-1)}},
\]
Furuichi and Tominaga \cite{Furuichi,Tominaga}, showed that the series of inequalities
\begin{equation}
S\left(c^{r_{0}(v)}\right)a^{1-v}b^{v}\leq(1-v)a+vb\leq S(c)a^{1-v}b^{v},\label{eq:Specht_ineq}
\end{equation}
where $c=a^{-1}b$, holds  for any
$a,b>0$ and $0\leq v\leq1$.

Basically, the Young inequality (\ref{eq:Young_ineq}) is a consequence of the famous Jensen inequality
\begin{equation}
\label{jensen}
f((1-v)a+vb)\leq (1-v)f(a)+vf(b),
\end{equation}
where $f$ is a convex function defined on the interval $I$, $a,b\in I$, and $0\leq v\leq 1$. Clearly, the Young inequality (\ref{eq:Young_ineq}) follows from (\ref{jensen})
by putting $f(x)=-\ln x$, where $\ln$ stands for  a natural logarithm.

The main objective of this paper is to provide a unified treatment of Young-type inequalities presented in this Introduction. More precisely,
we will present a general improvement of a Jensen-type inequality related to piecewise convex functions and use it to refine some well-known classical inequalities.
As an application, we will also derive improved versions of some important matrix inequalities known from the literature. It should be noticed
here that the operator or matrix inequalities related to the
scalar inequalities introduced in this section can be found in some recent papers including \cite{Dragomir,Furuichi,Kittaneh_Manasrah,Liao_Wu,Mincluete,Wu_Zhao,Zhao_Wu}.

\section{The main result related to convex and piecewise convex functions}

In this section we give an improved version of the Jensen inequality that will, in some way, gather the relations presented in the previous section. Our main result will rely
on the linear interpolation of a convex function.

Throughout the paper, we will use the functions $r_{n}(v)$ defined
recursively by
\begin{eqnarray*}
r_{0}(v) & = & \min\{v,1-v\},\\
r_{n}(v) & = & \min\{2r_{n-1}(v),1-2r_{n-1}(v)\},
\end{eqnarray*}
for $0\leq v\leq1$. Note that $r_{0}(v)$ and $r_{1}(v)$ can be
rewritten as
\[
r_{0}(v)=\begin{cases}
v, & 0\leq v\leq\frac{1}{2}\\
1-v, & \frac{1}{2}<v\leq1
\end{cases},\quad r_{1}(v)=\begin{cases}
2v, & 0\leq v\leq\frac{1}{4}\\
1-2v, & \frac{1}{4}<v\leq\frac{1}{2}\\
2v-1, & \frac{1}{2}<v\leq\frac{3}{4}\\
2-2v, & \frac{3}{4}<v\leq1
\end{cases}.
\]
Generally, $r_{n}(v)$ can be expressed as multipart functions.
\begin{lem}
\label{lem:r_n} \cite{Choi_reverse} Let $n$ be
a nonnegative integer and $0\leq v\leq1$. If $\frac{k-1}{2^{n}}\leq v\leq\frac{k}{2^{n}}$
for $k=1,\ldots,2^{n}$, then
\[
r_{n}(v)=\begin{cases}
2^{n}v-k+1, & \frac{k-1}{2^{n}}\leq v\leq\frac{2k-1}{2^{n+1}}\\
k-2^{n}v, & \frac{2k-1}{2^{n+1}}<v\leq\frac{k}{2^{n}}
\end{cases}.
\]
\end{lem}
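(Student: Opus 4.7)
The plan is to prove Lemma \ref{lem:r_n} by induction on $n$. For the base case $n=0$, the only admissible value is $k=1$, and the stated piecewise formula reduces to $r_{0}(v)=v$ on $[0,\frac{1}{2}]$ and $r_{0}(v)=1-v$ on $(\frac{1}{2},1]$, which is exactly the definition of $r_{0}$.

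For the inductive step, I would assume the formula at level $n$ and compute $r_{n+1}(v)=\min\{2r_{n}(v),1-2r_{n}(v)\}$ piece by piece. A preliminary observation that I would record explicitly is that the level-$n$ formula forces $0\le r_{n}(v)\le\frac{1}{2}$, so $2r_{n}(v)$ and $1-2r_{n}(v)$ both lie in $[0,1]$. Consequently $\min\{2r_{n}(v),1-2r_{n}(v)\}$ equals $2r_{n}(v)$ precisely when $r_{n}(v)\le\frac{1}{4}$, and equals $1-2r_{n}(v)$ otherwise; on each linear piece of $r_{n}$ this threshold bisects the piece.

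The substantive content is the bookkeeping of indices. Fix $k\in\{1,\ldots,2^{n}\}$. On the left half $[\frac{k-1}{2^{n}},\frac{2k-1}{2^{n+1}}]$ of the $k$-th level-$n$ interval, set $k':=2k-1$, so that this half coincides with the level-$(n+1)$ interval $[\frac{k'-1}{2^{n+1}},\frac{k'}{2^{n+1}}]$. By the inductive hypothesis $r_{n}(v)=2^{n}v-k+1$ there, so the condition $r_{n}(v)=\frac{1}{4}$ is met at $v=\frac{4k-3}{2^{n+2}}=\frac{2k'-1}{2^{n+2}}$, which is exactly the midpoint of this level-$(n+1)$ interval. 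Substituting into $2r_{n}$ and $1-2r_{n}$ and using $k'=2k-1$ yields $r_{n+1}(v)=2^{n+1}v-k'+1$ on the lower half and $r_{n+1}(v)=k'-2^{n+1}v$ on the upper half, matching the claim. The symmetric analysis of the right half $[\frac{2k-1}{2^{n+1}},\frac{k}{2^{n}}]$ (where $r_{n}(v)=k-2^{n}v$) gives the same conclusion with $k':=2k$. Since the pairs $(2k-1,2k)$ for $k=1,\ldots,2^{n}$ exhaust $\{1,\ldots,2^{n+1}\}$, the induction closes.

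The only obstacle is clerical: one must keep the indices and the open/closed endpoint conventions consistent, so that the boundary point $v=\frac{2k-1}{2^{n+1}}$, at which $r_{n}$ peaks at $\frac{1}{2}$ and $r_{n+1}$ vanishes, is assigned the same value by the two adjacent pieces of the level-$(n+1)$ formula. No genuine analytic difficulty enters beyond this.
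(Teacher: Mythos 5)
Your proposal is correct and follows essentially the same route as the paper: induction on $n$, using the inductive formula to locate where $r_n(v)=\tfrac14$ (the point at which $\min\{2r_n,1-2r_n\}$ switches branches) and then verifying the resulting linear expressions against the claimed level-$(n+1)$ formula. The only difference is organizational --- you fix a parent level-$n$ interval and split it into the two child intervals $k'=2k-1,2k$, whereas the paper fixes a child level-$(n+1)$ interval and case-splits on the parity of its index --- which amounts to the same computation.
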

\begin{proof}
We prove it by induction on $n$. The case $n=0$ is obvious. Assume
$\frac{k-1}{2^{n+1}}\leq v\leq\frac{k-1}{2^{n+1}}+\frac{1}{2^{n+2}}$.
If $k=2m-1$ is odd, then $\frac{m-1}{2^{n}}\leq v\leq\frac{m-1}{2^{n}}+\frac{1}{2^{n+2}}<\frac{2m-1}{2^{n+1}}$
and $r_{n}(v)=2^{n}v-m+1$ by induction. Since $v\leq\frac{2k-1}{2^{n+2}}$,
\[
r_{n}(v)=2^{n}v-\frac{k-1}{2}\leq\frac{2k-1}{4}-\frac{k-1}{2}=\frac{1}{4}
\]
and
\[
r_{n+1}(v)=\min\{2r_{n}(v),1-2r_{n}(v)\}=2r_{n}(v)=2^{n+1}v-k+1.
\]
If $k=2m$ is even, then $\frac{2m-1}{2^{n+1}}\leq v\leq\frac{m}{2^{n}}-\frac{1}{2^{n+2}}<\frac{m}{2^{n}}$
and $r_{n}(v)=m-2^{n}v$ by induction. Since $v\leq\frac{2k-1}{2^{n+2}}$,
\[
r_{n}(v)=\frac{k}{2}-2^{n}v\geq\frac{k}{2}-\frac{2k-1}{4}=\frac{1}{4}
\]
and
\[
r_{n+1}(v)=1-2r_{n}(v)=2^{n+1}v-k+1.
\]
Using the same argument, we can show that if $\frac{k-1}{2^{n+1}}+\frac{1}{2^{n+2}}<v\leq\frac{k}{2^{n+1}}$,
then $r_{n+1}(v)=k-2^{n+1}v$. We omit the detailed proof.
\end{proof}
The functions $r_{n}$ can be used for linear interpolation as follows.
\begin{lem}
\label{lem:varphi_interp}Let $f$ be a function defined on $[0,1]$.
For a nonnegative integer $N$, define $\varphi_{N}(v)$ by
\begin{eqnarray*}
\varphi_{N}(v) & = & (1-v)f(0)+vf(1)-\sum_{n=0}^{N-1}r_{n}(v)\sum_{k=1}^{2^{n}}\Delta_{f}(n,k)\chi_{(\frac{k-1}{2^{n}},\frac{k}{2^{n}})}(v),
\end{eqnarray*}
where
\[
\Delta_{f}(n,k)=f(\frac{k-1}{2^{n}})+f(\frac{k}{2^{n}})-2f(\frac{2k-1}{2^{n+1}})
\]
and the summation is assumed to be zero if $N=0$. Then, $\varphi_{N}(v)$
is the linear interpolation of $f(v)$ at $v=k/2^{N}$, $k=0,1,\ldots,2^{N}$. \end{lem}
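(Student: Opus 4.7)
The plan is to proceed by induction on $N$. For $N=0$ the empty-sum convention gives $\varphi_0(v)=(1-v)f(0)+vf(1)$, which is by construction the linear interpolation of $f$ through the two nodes $v=0,1$, so the base case is immediate.

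For the inductive step, observe that the definition splits off the $(N-1)$-th summand:
$$\varphi_{N+1}(v)=\varphi_N(v)-r_N(v)\sum_{k=1}^{2^N}\Delta_f(N,k)\,\chi_{\left(\frac{k-1}{2^N},\frac{k}{2^N}\right)}(v).$$
Fix $k$ and restrict attention to the interval $[(k-1)/2^N,k/2^N]$. By Lemma \ref{lem:r_n}, on this interval $r_N$ vanishes at both endpoints, equals $1/2$ at the midpoint $(2k-1)/2^{N+1}$, and is affine on each of the two half-intervals of length $1/2^{N+1}$. Consequently $r_N(v)\Delta_f(N,k)$ is a tent-shaped correction that is zero at the endpoints, piecewise affine on the two halves, and attains the value $\tfrac12\Delta_f(N,k)$ at the midpoint.

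By the induction hypothesis, $\varphi_N$ is affine on $[(k-1)/2^N,k/2^N]$ with boundary values $f((k-1)/2^N)$ and $f(k/2^N)$, so its value at the midpoint is the average $\tfrac12\bigl(f((k-1)/2^N)+f(k/2^N)\bigr)$. Subtracting the correction and using the very definition of $\Delta_f(N,k)$, one computes
$$\varphi_{N+1}\!\left(\tfrac{2k-1}{2^{N+1}}\right)=\tfrac12\bigl(f(\tfrac{k-1}{2^N})+f(\tfrac{k}{2^N})\bigr)-\tfrac12\Delta_f(N,k)=f\!\left(\tfrac{2k-1}{2^{N+1}}\right).$$
At the old nodes $(k-1)/2^N$ and $k/2^N$ the correction vanishes, so $\varphi_{N+1}$ still agrees with $f$ there. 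Finally, $\varphi_{N+1}$ is affine on each half-interval of length $1/2^{N+1}$ as the sum of two affine pieces, so it is the linear interpolation of $f$ at all $2^{N+1}+1$ nodes $k/2^{N+1}$, closing the induction.

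The only mildly delicate point is that the characteristic functions are on \emph{open} dyadic intervals, so one has to verify that all dyadic endpoints are handled consistently; this is automatic since $r_n$ vanishes at every endpoint of $((k-1)/2^n,k/2^n)$, and no substantial obstacle is expected beyond careful bookkeeping of the tent heights and endpoint values.
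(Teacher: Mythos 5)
Your proof is correct and follows essentially the same route as the paper: induction on $N$, splitting off the $r_N$-correction and using Lemma \ref{lem:r_n} to see that $r_N$ is a tent on each interval $[\frac{k-1}{2^N},\frac{k}{2^N}]$ vanishing at the endpoints and equal to $\tfrac12$ at the midpoint. The paper carries out the same step by explicitly writing the affine formula for $\varphi_N$ on each subinterval and verifying it algebraically in the two cases $m=2k-1$ and $m=2k$, whereas you phrase it via the uniqueness of the piecewise-affine interpolant; the substance is identical.
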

\begin{proof}
First we note that since $r_{n}(\frac{k}{2^{n}})=0$ for $0\leq k\leq2^{n}$,
the interval of the characteristic function may contain boundary points.
For example, $\chi_{(\frac{k-1}{2^{n}},\frac{k}{2^{n}})}$ can be
replaced by $\chi_{(\frac{k-1}{2^{n}},\frac{k}{2^{n}}]}$ or $\chi_{[\frac{k-1}{2^{n}},\frac{k}{2^{n}}]}$.
We will show that
\begin{equation}
\varphi_{N}(v)=(k-2^{N}v)f(\frac{k-1}{2^{N}})+(2^{N}v-k+1)f(\frac{k}{2^{N}})\label{eq:show_this}
\end{equation}
for $\frac{k-1}{2^{N}}\leq v\leq\frac{k}{2^{N}}$ and $k=1,\ldots,2^{N}$
by induction on $N$. It is obvious for $N=0$. Now, assume that (\ref{eq:show_this})
holds and let $\frac{m-1}{2^{N+1}}\leq v\leq\frac{m}{2^{N+1}}$ for
$m=1,\ldots,2^{N+1}$. If $m=2k-1$, then $\frac{k-1}{2^{N}}\leq v\leq\frac{2k-1}{2^{N+1}}<\frac{k}{2^{N}}$
and
\begin{eqnarray*}
\varphi_{N+1}(v) & = & \varphi_{N}(v)-r_{N}(v)\Delta_{f}(N,k)\\
 & = & (k-2^{N}v)f(\frac{k-1}{2^{N}})+(2^{N}v-k+1)f(\frac{k}{2^{N}})-(2^{N}v-k+1)\Delta_{f}(N,k)\\
 & = & (2k-2^{N+1}v-1)f(\frac{k-1}{2^{N}})+(2^{N+1}v-2k+2)f(\frac{2k-1}{2^{N+1}})\\
 & = & (m-2^{N+1}v)f(\frac{m-1}{2^{N+1}})+(2^{N+1}v-m+1)f(\frac{m}{2^{N+1}})
\end{eqnarray*}
by Lemma \ref{lem:r_n}. Similarly, if $m=2k$, then $\frac{k-1}{2^{N}}<\frac{2k-1}{2^{N+1}}\leq v\leq\frac{k}{2^{N}}$
and
\begin{eqnarray*}
\varphi_{N+1}(v) & = & \varphi_{N}(v)-r_{N}(v)\Delta_{f}(N,k)\\
 & = & (k-2^{N}v)f(\frac{k-1}{2^{N}})+(2^{N}v-k+1)f(\frac{k}{2^{N}})-(k-2^{N}v)\Delta_{f}(N,k)\\
 & = & (2k-2^{N+1}v)f(\frac{2k-1}{2^{N+1}})+(2^{N+1}v-2k+1)f(\frac{k}{2^{N}})\\
 & = & (m-2^{N+1}v)f(\frac{m-1}{2^{N+1}})+(2^{N+1}v-m+1)f(\frac{m}{2^{N+1}}).
\end{eqnarray*}
\end{proof}
From now on, any summation having $\sum_{n=0}^{N-1}$ will be assumed
to be zero for $N=0$ and $\Delta_{f}(n,k)$ defined in Lemma \ref{lem:varphi_interp}
will be used throughout the paper.

Now, we are ready to state and prove our main result.
The following theorem is based on a fact that a convex function can be estimated
by using the linear interpolations $\varphi_{N}(v)$ in Lemma \ref{lem:varphi_interp}. In fact, such estimation provides a refinement of the Jensen inequality for a convex function defined on the interval $[0,1]$.
\begin{thm}
\label{lem:convex_lemma}Let $N$ be a nonnegative integer. If $f(v)$
is convex on $[0,1]$, then
\begin{eqnarray}
 &  & (1-v)f(0)+vf(1)\geq f(v)\label{eq:f_convex_ineq}\\
 &  & \qquad+\sum_{n=0}^{N-1}r_{n}(v)\sum_{k=1}^{2^{n}}\Delta_{f}(n,k)\chi_{(\frac{k-1}{2^{n}},\frac{k}{2^{n}})}(v)\nonumber
\end{eqnarray}
and
\begin{eqnarray}
 &  & (1-v)f(0)+vf(1)\leq f(0)+f(1)-f(1-v)\label{eq:f_convex_ineq_reverse}\\
 &  & \qquad-\sum_{n=0}^{N-1}r_{n}(v)\sum_{k=1}^{2^{n}}\Delta_{f}(n,2^{n}-k+1)\chi_{(\frac{k-1}{2^{n}},\frac{k}{2^{n}})}(v).\nonumber
\end{eqnarray}
\end{thm}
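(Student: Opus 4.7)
The plan is to read off the first inequality directly from Lemma \ref{lem:varphi_interp} together with the definition of a convex function, and then to deduce the reverse inequality from the direct one via the substitution $v\mapsto 1-v$ combined with the symmetries of $r_n$ and of the family of characteristic functions.

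For the direct inequality \eqref{eq:f_convex_ineq}, I would first rewrite it as
$$\varphi_{N}(v)\geq f(v),\qquad v\in[0,1],$$
where $\varphi_N$ is the function from Lemma \ref{lem:varphi_interp}. By that lemma, on each dyadic subinterval $[\tfrac{k-1}{2^{N}},\tfrac{k}{2^{N}}]$, $\varphi_{N}(v)$ equals the secant line through $(\tfrac{k-1}{2^{N}},f(\tfrac{k-1}{2^{N}}))$ and $(\tfrac{k}{2^{N}},f(\tfrac{k}{2^{N}}))$. Since $f$ is convex on $[0,1]$, this secant line lies above the graph of $f$ on that subinterval, which gives $\varphi_{N}(v)\geq f(v)$ pointwise on $[0,1]$; moving the interpolation correction back to the right-hand side yields \eqref{eq:f_convex_ineq}.

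For the reverse inequality \eqref{eq:f_convex_ineq_reverse}, the natural move is to apply \eqref{eq:f_convex_ineq} with $v$ replaced by $1-v$, obtaining
$$vf(0)+(1-v)f(1)\geq f(1-v)+\sum_{n=0}^{N-1}r_{n}(1-v)\sum_{k=1}^{2^{n}}\Delta_{f}(n,k)\chi_{(\frac{k-1}{2^{n}},\frac{k}{2^{n}})}(1-v).$$
Next I would verify two symmetries: first, $r_{n}(1-v)=r_{n}(v)$, which follows by a straightforward induction from $r_{0}(1-v)=\min\{1-v,v\}=r_{0}(v)$ and the recursion; second, for each fixed $n$,
$$\chi_{(\frac{k-1}{2^{n}},\frac{k}{2^{n}})}(1-v)=\chi_{(\frac{2^{n}-k}{2^{n}},\frac{2^{n}-k+1}{2^{n}})}(v),$$
so the reindexing $k\mapsto 2^{n}-k+1$ converts the inner sum into $\sum_{k=1}^{2^{n}}\Delta_{f}(n,2^{n}-k+1)\chi_{(\frac{k-1}{2^{n}},\frac{k}{2^{n}})}(v)$. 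Substituting these identities and then solving the resulting inequality for $(1-v)f(0)+vf(1)=f(0)+f(1)-\bigl(vf(0)+(1-v)f(1)\bigr)+\bigl((1-v)f(0)+vf(1)\bigr)$ gives \eqref{eq:f_convex_ineq_reverse} after a short rearrangement.

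The only substantive step is the first one: recognizing that the theorem is equivalent to the pointwise inequality $\varphi_N(v)\geq f(v)$, which is then immediate from convexity once Lemma \ref{lem:varphi_interp} has identified $\varphi_N$ as the piecewise linear interpolant at the dyadic nodes. The symmetry argument for \eqref{eq:f_convex_ineq_reverse} is bookkeeping; the mild care needed is the reindexing of $k$ and the observation that $r_n$ is symmetric about $v=1/2$, which is where I expect most of the actual writing to go.
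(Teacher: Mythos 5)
Your proposal is correct and follows essentially the same route as the paper: the first inequality is exactly the pointwise bound $\varphi_N(v)\geq f(v)$ from Lemma \ref{lem:varphi_interp} plus convexity, and the reverse inequality is obtained, just as in the paper, by substituting $v\mapsto 1-v$, using $r_n(1-v)=r_n(v)$, and reindexing $k\mapsto 2^n-k+1$ in the inner sum.
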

\begin{proof}
By Lemma \ref{lem:varphi_interp}, we have $\varphi_{N}(v)\geq f(v)$
which represents (\ref{eq:f_convex_ineq}). Replacing $v$ by $1-v$
in (\ref{eq:f_convex_ineq}) and noting that $r_{n}(v)=r_{n}(1-v)$, we
have
\begin{eqnarray*}
(1-v)f(0)+vf(1) & \leq & f(0)+f(1)-f(1-v)\\
 &  & -\sum_{n=0}^{N-1}r_{n}(v)\sum_{k=1}^{2^{n}}\Delta_{f}(n,k)\chi_{(\frac{k-1}{2^{n}},\frac{k}{2^{n}})}(1-v).
\end{eqnarray*}
Now, replacing $k$ by $2^{n}-k+1$ in the inner summation and noting that
\[
\frac{k-1}{2^{n}}<1-v<\frac{k}{2^{n}}\iff1-\frac{k}{2^{n}}<v<1-\frac{k-1}{2^{n}},
\]
we obtain (\ref{eq:f_convex_ineq_reverse}) and the proof is completed.
\end{proof}
It should be noticed here that $\Delta_{f}\geq0$ in the previous theorem since $f$ is convex.
Therefore the inequality (\ref{eq:f_convex_ineq}) represents the refinement of the Jensen inequality for a convex function defined on the interval $[0,1]$.

Furthermore, it is important to emphasize  that Theorem \ref{lem:convex_lemma} can also  be applied to piecewise convex functions. For example,
if $f(v)$ is convex on $[0,\frac{1}{2}]$ and $[\frac{1}{2},1]$
and $f(\frac{1}{2})\leq\frac{1}{2}(f(0)+f(1))$, then $f(v)$ fulfills
the inequalities as in the theorem. More generally, if $f(v)$ is convex
on intervals of the form $[\frac{k-1}{2^{N}},\frac{k}{2^{N}}]$ and
$\Delta_{f}(N,k)\geq0$ for $1\leq k\leq2^{N}$, then the inequalities (\ref{eq:f_convex_ineq})
and (\ref{eq:f_convex_ineq_reverse}) are still valid for $f$.

\section{Improved versions of Young-type inequalities}

In this section, we will see how the Jensen-type inequalities from Theorem \ref{lem:convex_lemma} can
be used to improve Young-type inequalities.
The most general forms of (\ref{eq:Kittaneh_Mana1}), (\ref{eq:zhao_Wu1}),
(\ref{eq:Kitta_Ma_reverse}), and (\ref{eq:zhao_Wu_rev}) have been
 proved recently.
\begin{thm}
\label{thm:Young_multiple} \cite{Choi_reverse,Sabab_Choi_Young}
Let $a,b>0$, $0\leq v\leq1$, and $N$ be a nonnegative integer. Then,
\begin{eqnarray}
(1-v)a+vb & \geq & a^{1-v}b^{v}+\sum_{n=0}^{N-1}r_{n}(v)\sum_{k=1}^{2^{n}}g_{n,k}(a,b)\chi_{(\frac{k-1}{2^{n}},\frac{k}{2^{n}})}(v),\label{eq:multiple_Y}\\
 & = & a^{1-v}b^{v}+r_{0}(v)(\sqrt{a}-\sqrt{b})^{2}\nonumber \\
 &  & +\sum_{n=1}^{N-1}r_{n}(v)\sum_{k=1}^{2^{n}}g_{n,k}(a,b)\chi_{(\frac{k-1}{2^{n}},\frac{k}{2^{n}})}(v),\nonumber
\end{eqnarray}
and
\begin{eqnarray}
(1-v)a+vb & \leq & a+b-a^{v}b^{1-v}\label{eq:Young_reverse_multi}\\
 &  & -\sum_{n=0}^{N-1}r_{n}(v)\sum_{k=1}^{2^{n}}g_{n,k}(b,a)\chi_{(\frac{k-1}{2^{n}},\frac{k}{2^{n}})}(v)\nonumber \\
 & = & 2\sqrt{ab}-a^{v}b^{1-v}+R_{0}(v)(\sqrt{a}-\sqrt{b})^{2}\nonumber \\
 &  & -\sum_{n=1}^{N-1}r_{n}(v)\sum_{k=1}^{2^{n}}g_{n,k}(b,a)\chi_{(\frac{k-1}{2^{n}},\frac{k}{2^{n}})}(v)\nonumber \\
 & \leq & a^{1-v}b^{v}+R_{0}(v)(\sqrt{a}-\sqrt{b})^{2}\nonumber \\
 &  & -\sum_{n=1}^{N-1}r_{n}(v)\sum_{k=1}^{2^{n}}g_{n,k}(b,a)\chi_{(\frac{k-1}{2^{n}},\frac{k}{2^{n}})}(v),\nonumber
\end{eqnarray}
where $g_{n,k}(a,b)=\Delta_{f}(n,k)$ with $f(v)=a^{1-v}b^{v}$, i.e.,
\begin{eqnarray*}
g_{n,k}(a,b) & = & a^{1-(k-1)/2^{n}}b^{(k-1)/2^{n}}+a^{1-k/2^{n}}b^{k/2^{n}}\\
 &  & -2a^{1-(2k-1)/2^{n+1}}b^{(2k-1)/2^{n+1}}\\
 & = & \left(\sqrt{a^{1-(k-1)/2^{n}}b^{(k-1)/2^{n}}}-\sqrt{a^{1-k/2^{n}}b^{k/2^{n}}}\right)^{2}.
\end{eqnarray*}
\end{thm}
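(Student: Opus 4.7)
The plan is to apply Theorem \ref{lem:convex_lemma} to the function $f(v)=a^{1-v}b^{v}$ on $[0,1]$. This function is convex since $f(v)=a\exp(v\ln(b/a))$ is an exponential in $v$, so the hypothesis of Theorem \ref{lem:convex_lemma} is fulfilled. Moreover, one immediately reads off $f(0)=a$, $f(1)=b$, and more generally $f(k/2^{n})=a^{1-k/2^{n}}b^{k/2^{n}}$, so that the quantity $\Delta_{f}(n,k)$ from Lemma \ref{lem:varphi_interp} coincides exactly with $g_{n,k}(a,b)$ stated in the theorem. The algebraic identity
\[
g_{n,k}(a,b)=\left(\sqrt{a^{1-(k-1)/2^{n}}b^{(k-1)/2^{n}}}-\sqrt{a^{1-k/2^{n}}b^{k/2^{n}}}\right)^{2}
\]
then follows from the elementary fact $x+y-2\sqrt{xy}=(\sqrt{x}-\sqrt{y})^{2}$ applied to $x=f((k-1)/2^{n})$ and $y=f(k/2^{n})$.

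With this identification, inequality (\ref{eq:multiple_Y}) is a direct restatement of (\ref{eq:f_convex_ineq}). The second line of (\ref{eq:multiple_Y}) is obtained simply by peeling off the $n=0$ term, noting that $g_{0,1}(a,b)=(\sqrt a-\sqrt b)^{2}$ and $\chi_{(0,1)}(v)=1$ for $v\in(0,1)$ (the endpoints being irrelevant since $r_{0}$ vanishes there).

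For the reverse chain (\ref{eq:Young_reverse_multi}), I would start from (\ref{eq:f_convex_ineq_reverse}) applied to the same $f$. Since $f(1-v)=a^{v}b^{1-v}$ and $f(0)+f(1)=a+b$, we get
\[
(1-v)a+vb\leq a+b-a^{v}b^{1-v}-\sum_{n=0}^{N-1}r_{n}(v)\sum_{k=1}^{2^{n}}\Delta_{f}(n,2^{n}-k+1)\chi_{(\frac{k-1}{2^{n}},\frac{k}{2^{n}})}(v).
\]
A direct computation, using $f(1-t)=a^{t}b^{1-t}$, shows that
\[
\Delta_{f}(n,2^{n}-k+1)=g_{n,k}(b,a),
\]
which yields the first form in (\ref{eq:Young_reverse_multi}). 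To pass to the second form, I would isolate the $n=0$ summand $r_{0}(v)(\sqrt a-\sqrt b)^{2}$ and verify the identity
\[
a+b-r_{0}(v)(\sqrt a-\sqrt b)^{2}=2\sqrt{ab}+R_{0}(v)(\sqrt a-\sqrt b)^{2},
\]
which follows at once by expanding $(\sqrt a-\sqrt b)^{2}=a+b-2\sqrt{ab}$ and using $R_{0}(v)=1-r_{0}(v)$. Finally, the last inequality in (\ref{eq:Young_reverse_multi}) reduces to $2\sqrt{ab}-a^{v}b^{1-v}\leq a^{1-v}b^{v}$, equivalently $a^{1-v}b^{v}+a^{v}b^{1-v}\geq 2\sqrt{ab}$, which is the AM--GM inequality applied to $a^{1-v}b^{v}$ and $a^{v}b^{1-v}$ whose product equals $ab$.

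The main obstacle is not analytic but bookkeeping: correctly tracking how the index substitution $k\mapsto 2^{n}-k+1$ in (\ref{eq:f_convex_ineq_reverse}) symmetrises $f(t)=a^{1-t}b^{t}$ into $g_{n,k}(b,a)$, and then rearranging the $n=0$ contribution so that the three equivalent forms of (\ref{eq:Young_reverse_multi}) line up. Once the identities $\Delta_{f}(n,k)=g_{n,k}(a,b)$, $\Delta_{f}(n,2^{n}-k+1)=g_{n,k}(b,a)$, and the $R_{0}(v)$-reduction are recorded, the whole proof is essentially a substitution into Theorem \ref{lem:convex_lemma}.
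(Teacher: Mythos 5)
Your proposal is correct and follows essentially the same route as the paper: apply Theorem \ref{lem:convex_lemma} to the convex function $f(v)=a^{1-v}b^{v}$, identify $\Delta_{f}(n,k)=g_{n,k}(a,b)$ and $\Delta_{f}(n,2^{n}-k+1)=g_{n,k}(b,a)$, peel off the $n=0$ term, and finish with the AM--GM inequality $2\sqrt{ab}\leq a^{1-v}b^{v}+a^{v}b^{1-v}$. The only difference is that you spell out the bookkeeping (the index reflection and the $R_{0}$-rearrangement) in more detail than the paper does.
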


Note that the inequalities (\ref{eq:Kittaneh_Mana1}), (\ref{eq:zhao_Wu1}),
(\ref{eq:Kitta_Ma_reverse}), and (\ref{eq:zhao_Wu_rev}) follow directly from Theorem
\ref{thm:Young_multiple} for $N=1$ and $N=2$. The original proof of Theorem \ref{thm:Young_multiple}
 was rather lengthy,  here we give a simple and elegant proof based on our  Theorem
\ref{lem:convex_lemma}.
\begin{proof}
{[}Theorem \ref{thm:Young_multiple}{]} Since $f(v)=a^{1-v}b^{v}$
is convex on $[0,1]$, the inequality (\ref{eq:multiple_Y}) follows from (\ref{eq:f_convex_ineq}),
where we note that if $n=0$, then
\[
r_{n}(v)\sum_{k=1}^{2^{n}}\Delta_{f}(n,k)\chi_{(\frac{k-1}{2^{n}},\frac{k}{2^{n}})}(v)=r_{0}(v)(\sqrt{a}-\sqrt{b})^{2}.
\]
Further, utilizing (\ref{eq:f_convex_ineq_reverse}) we have
\begin{eqnarray*}
(1-v)a+vb & \leq & a+b-a^{v}b^{1-v}-\sum_{n=0}^{N-1}r_{n}(v)\sum_{k=1}^{2^{n}}g_{n,k}(b,a)\chi_{(\frac{k-1}{2^{n}},\frac{k}{2^{n}})}(v)\\
 & = & 2\sqrt{ab}-a^{v}b^{1-v}+R_{0}(v)(\sqrt{a}-\sqrt{b})^{2}\\
 &  & -\sum_{n=1}^{N-1}r_{n}(v)\sum_{k=1}^{2^{n}}g_{n,k}(b,a)\chi_{(\frac{k-1}{2^{n}},\frac{k}{2^{n}})}(v).
\end{eqnarray*}
Now, the second inequality in (\ref{eq:Young_reverse_multi}) follows by
the arithmetic-geometric mean inequality
$
2\sqrt{ab}\leq a^{v}b^{1-v}+a^{1-v}b^{v}.
$
\end{proof}

The inequalities (\ref{eq:Wu}) and (\ref{eq:Zhao_Wu-1}) involving  Kantorovich
constants can also be generalized in the following way.
\begin{thm}
\label{thm:Kantoro} \cite{Choi_reverse} Let $a,b>0$, $0\leq v\leq1$,
and $N$ be a nonnegative integer. Then
\begin{eqnarray}
(1-v)a+vb & \geq & K_{N}(a,b)^{r_{N}(v)}a^{1-v}b^{v}\label{eq:Kanto_Y}\\
 &  & +\sum_{n=0}^{N-1}r_{n}(v)\sum_{k=1}^{2^{n}}g_{n,k}(a,b)\chi_{(\frac{k-1}{2^{n}},\frac{k}{2^{n}})}(v)\nonumber \\
 & = & K_{N}(a,b)^{r_{N}(v)}a^{1-v}b^{v}+r_{0}(v)(\sqrt{a}-\sqrt{b})^{2}\nonumber \\
 &  & +\sum_{n=1}^{N-1}r_{n}(v)\sum_{k=1}^{2^{n}}g_{n,k}(a,b)\chi_{(\frac{k-1}{2^{n}},\frac{k}{2^{n}})}(v)\nonumber
\end{eqnarray}
and
\begin{eqnarray}
 &  & (1-v)a+vb\leq a+b-K_{N}(a,b)^{r_{N}(v)}a^{v}b^{1-v}\label{eq:Kanto_Y_reverse}\\
 &  & \hfill\hphantom{a+b-K_{N}(a,b)^{r_{N}(v)}}-\sum_{n=0}^{N-1}r_{n}(v)\sum_{k=1}^{2^{n}}g_{n,k}(b,a)\chi_{(\frac{k-1}{2^{n}},\frac{k}{2^{n}})}(v)\nonumber \\
 &  & \hphantom{(1-v)a+vb}=2\sqrt{ab}-K_{N}(a,b)^{r_{N}(v)}a^{v}b^{1-v}+R_{0}(v)(\sqrt{a}-\sqrt{b})^{2}\nonumber \\
 &  & \hfill\hphantom{a+b-K_{N}(a,b)^{r_{N}(v)}}-\sum_{n=1}^{N-1}r_{n}(v)\sum_{k=1}^{2^{n}}g_{n,k}(b,a)\chi_{(\frac{k-1}{2^{n}},\frac{k}{2^{n}})}(v)\nonumber \\
 &  & \hphantom{(1-v)a+vb}\leq K_{N}(a,b)^{-r_{N}(v)}a^{1-v}b^{v}+R_{0}(v)(\sqrt{a}-\sqrt{b})^{2}\nonumber \\
 &  & \hfill\hphantom{a+b-K_{N}(a,b)^{r_{N}(v)}}-\sum_{n=1}^{N-1}r_{n}(v)\sum_{k=1}^{2^{n}}g_{n,k}(b,a)\chi_{(\frac{k-1}{2^{n}},\frac{k}{2^{n}})}(v),\nonumber
\end{eqnarray}
where
\[
K_{N}(a,b)=\frac{\left(a^{1/2^{N}}+b^{1/2^{N}}\right)^{2}}{4(ab)^{1/2^{N}}}
\]
and $g_{n,k}$ is  defined in Theorem \ref{thm:Young_multiple}.
\end{thm}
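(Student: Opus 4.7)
The plan is to prove Theorem \ref{thm:Kantoro} by combining Theorem \ref{lem:convex_lemma} (or equivalently the interpolation identity in Lemma \ref{lem:varphi_interp}) with the classical Kantorovich-refined Young inequality at the base level $N=0$, namely
\[
(1-u)A+uB\geq\left(\tfrac{(A+B)^{2}}{4AB}\right)^{r_{0}(u)}A^{1-u}B^{u},\qquad 0\leq u\leq 1,\ A,B>0,
\]
which is the $N=0$ instance of (\ref{eq:Kanto_Y}) and is available in the literature (e.g.\ \cite{Choi_reverse}).

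First I would apply Lemma \ref{lem:varphi_interp} to $f(v)=a^{1-v}b^{v}$, which is convex on $[0,1]$. This tells me that the function $\varphi_{N}(v)$ coincides with the piecewise-linear interpolant of $f$ at the dyadic nodes $k/2^{N}$, and hence for $v\in[\tfrac{k-1}{2^{N}},\tfrac{k}{2^{N}}]$, writing $u=2^{N}v-k+1\in[0,1]$,
\[
\varphi_{N}(v)=(1-u)A+uB,\qquad A:=a^{1-(k-1)/2^{N}}b^{(k-1)/2^{N}},\ B:=a^{1-k/2^{N}}b^{k/2^{N}}.
\]
Next I would feed this pair $(A,B)$ into the base-case Kantorovich Young. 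The key algebraic checks are: (i) $A^{1-u}B^{u}=a^{1-v}b^{v}$ by a direct exponent computation, since $v=(k-1)/2^{N}+u/2^{N}$; (ii) $\tfrac{(A+B)^{2}}{4AB}=\tfrac{1}{4}(A/B+2+B/A)=\tfrac{(a^{1/2^{N}}+b^{1/2^{N}})^{2}}{4(ab)^{1/2^{N}}}=K_{N}(a,b)$, because $A/B=(a/b)^{1/2^{N}}$; and (iii) $r_{0}(u)=\min(u,1-u)=r_{N}(v)$ on the subinterval, which is exactly the formula in Lemma \ref{lem:r_n}. Combining these with (\ref{eq:f_convex_ineq}) gives
\[
(1-v)a+vb-\sum_{n=0}^{N-1}r_{n}(v)\sum_{k=1}^{2^{n}}g_{n,k}(a,b)\chi_{(\frac{k-1}{2^{n}},\frac{k}{2^{n}})}(v)=\varphi_{N}(v)\geq K_{N}(a,b)^{r_{N}(v)}a^{1-v}b^{v},
\]
which is precisely (\ref{eq:Kanto_Y}); the alternate form follows by peeling off the $n=0$ summand.

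For the reverse inequality (\ref{eq:Kanto_Y_reverse}), I would replace $v$ by $1-v$ in the forward inequality just proved. Using $r_{n}(1-v)=r_{n}(v)$ and the index shift $k\mapsto 2^{n}-k+1$ (which maps $\chi_{(\frac{k-1}{2^{n}},\frac{k}{2^{n}})}(1-v)$ to $\chi_{(\frac{k-1}{2^{n}},\frac{k}{2^{n}})}(v)$ and sends $g_{n,k}(a,b)$ to $g_{n,k}(b,a)$), I obtain
\[
K_{N}(a,b)^{r_{N}(v)}a^{v}b^{1-v}\leq va+(1-v)b-\sum_{n=0}^{N-1}r_{n}(v)\sum_{k=1}^{2^{n}}g_{n,k}(b,a)\chi_{(\frac{k-1}{2^{n}},\frac{k}{2^{n}})}(v),
\]
and subtracting this from $a+b$ yields (\ref{eq:Kanto_Y_reverse}); the alternative middle form follows from separating the $n=0$ term, and the final inequality is the AM-GM bound $2\sqrt{ab}\leq a^{v}b^{1-v}+a^{1-v}b^{v}$ exactly as in the proof of Theorem \ref{thm:Young_multiple}.

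The main obstacle is really bookkeeping rather than analysis: one must carefully identify the piecewise structure so that applying the base $N=0$ Kantorovich-Young to $(A,B)$ on each dyadic subinterval reproduces $K_{N}(a,b)$ on the nose, and one must verify that the index reversal $k\mapsto 2^{n}-k+1$ in the reverse direction converts $g_{n,k}(a,b)$ into $g_{n,k}(b,a)$ without any sign or exponent slip. Once these identifications are made, the theorem drops out of Theorem \ref{lem:convex_lemma} with essentially no extra work, mirroring the short proof of Theorem \ref{thm:Young_multiple} and avoiding the lengthier arguments in \cite{Choi_reverse}.
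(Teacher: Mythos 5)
Your reduction is correct, and it is genuinely different from the paper's argument. The paper does not reduce to the case $N=0$: it applies Theorem \ref{lem:convex_lemma} (in its piecewise-convex form) directly to the function $f(v)=K_{N}(a,b)^{r_{N}(v)}a^{1-v}b^{v}$, observing that $f$ has the form $\alpha\beta^{v}$, hence is convex, on each interval $[\frac{m-1}{2^{N+1}},\frac{m}{2^{N+1}}]$, that $\Delta_{f}(n,k)=g_{n,k}(a,b)$ for $n<N$, and that $\Delta_{f}(N,k)=0$; this yields (\ref{eq:Kanto_Y}) and the first line of (\ref{eq:Kanto_Y_reverse}) simultaneously. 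You instead use only the interpolation \emph{identity} of Lemma \ref{lem:varphi_interp} for $a^{1-v}b^{v}$ and then rescale each dyadic subinterval so that the $N=0$ inequality applied to the endpoint values $A,B$ reproduces $K_{N}(a,b)^{r_{N}(v)}$; your checks (i)--(iii) are all correct (in particular $r_{0}(u)=r_{N}(v)$ follows from Lemma \ref{lem:r_n} exactly as you say), and the $v\mapsto1-v$, $k\mapsto2^{n}-k+1$ bookkeeping for the reverse inequality is also right. What this self-similarity argument buys is modularity; what it costs is that the entire analytic content is concentrated in the base case $(1-u)A+uB\geq\bigl(\tfrac{(A+B)^{2}}{4AB}\bigr)^{r_{0}(u)}A^{1-u}B^{u}$, which you import from the literature rather than prove. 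Since that base case is exactly the $N=0$ instance of the theorem, your argument as written is a reduction of the general case to the special case, not a free-standing proof; to close it you would still need the paper's key observation (piecewise convexity of $K_{0}(a,b)^{r_{0}(v)}a^{1-v}b^{v}$ together with $\Delta(0,1)=0$), at least for $N=0$.

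One concrete slip: the final inequality in (\ref{eq:Kanto_Y_reverse}) does not follow from $2\sqrt{ab}\leq a^{v}b^{1-v}+a^{1-v}b^{v}$ as you assert. Because $K_{N}(a,b)^{-r_{N}(v)}\leq1$, that unweighted bound is too weak here; what is needed, and what the paper uses, is $2\sqrt{ab}\leq K_{N}(a,b)^{r_{N}(v)}a^{v}b^{1-v}+K_{N}(a,b)^{-r_{N}(v)}a^{1-v}b^{v}$, which is again just the arithmetic--geometric mean inequality since the two summands multiply to $ab$. This is easily repaired but should be stated correctly.
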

The original proof of the above theorem can  also be simplified
by virtue of Theorem \ref{lem:convex_lemma}.
\begin{proof}
{[}Theorem \ref{thm:Kantoro}{]} Let $f(v)=K_{N}(a,b)^{r_{N}(v)}a^{1-v}b^{v}$.
Since $K_{N}(a,b)$ does not depend on variable  $v$ and $r_{N}(v)$ is a line
segment on each interval $I_{m}=[\frac{m-1}{2^{N+1}},\frac{m}{2^{N+1}}]$
for $1\leq m\leq2^{N+1}$, $f(v)$ is of the form $\alpha\beta^{v}$
on $I_{m}$ for some $\alpha,\beta>0$. Thus $f$ is convex on $I_{m}$
for $1\leq m\leq2^{N+1}$. Moreover, since $r_{N}(\frac{k}{2^{N}})=0$
for $0\leq k\leq2^{N}$, a direct computation shows that
\[
\Delta_{f}(n,k)=\begin{cases}
g_{n,k}(a,b), & 0\leq n<N\\
0, & n=N
\end{cases}.
\]
Although the function $f$ is not convex
on $[0,1]$, it is convex on intervals $I_{m}$. Moreover, since $\Delta_{f}(N,k)=0$,  Theorem \ref{lem:convex_lemma} can be applied
to function $f$. This yields  the inequality (\ref{eq:Kanto_Y}) and the first inequality in (\ref{eq:Kanto_Y_reverse}).
Finally, the second inequality in (\ref{eq:Kanto_Y_reverse}) follows
simply from the arithmetic-geometric mean inequality:
\[
2\sqrt{ab}\leq K_{N}(a,b)^{r_{N}(v)}a^{v}b^{1-v}+K_{N}(a,b)^{-r_{N}(v)}a^{1-v}b^{v}.
\]
\end{proof}

It should be noticed here that the inequalities (\ref{eq:Wu}) and (\ref{eq:Zhao_Wu-1}) are the special
cases of Theorem \ref{thm:Kantoro} with $N=1$ and $N=2$. In order to conclude our discussion regarding the previous theorem,
we show that the Kantorovich constants $K_{N}(a,b)$ appearing in Theorem \ref{thm:Kantoro} are the best possible.

\begin{prop}
Let $N$ be a nonnegative integer. If $\xi(a,b)$ is a nonnegative
function such that
\begin{equation}
(1-v)a+vb\geq\xi(a,b)^{r_{N}(v)}a^{1-v}b^{v}+\sum_{n=0}^{N-1}r_{n}(v)\sum_{k=1}^{2^{n}}g_{n,k}(a,b)\chi_{(\frac{k-1}{2^{n}},\frac{k}{2^{n}})}(v)\label{eq:xi_Kantoro}
\end{equation}
for $a,b>0$ and $0\leq v\leq1$, then $\xi(a,b)\leq K_{N}(a,b)$.\end{prop}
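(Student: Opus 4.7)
The plan is to evaluate the hypothesis (\ref{eq:xi_Kantoro}) at a carefully chosen value $v^{*}$ where the companion inequality (\ref{eq:Kanto_Y}) of Theorem \ref{thm:Kantoro} holds as an equality. The sum terms in (\ref{eq:xi_Kantoro}) and (\ref{eq:Kanto_Y}) are identical in structure, so at such a $v^{*}$ they will cancel when the two instances are compared, leaving
\begin{equation*}
\xi(a,b)^{r_{N}(v^{*})} \leq K_{N}(a,b)^{r_{N}(v^{*})}.
\end{equation*}
Provided $r_{N}(v^{*}) > 0$, this delivers the desired bound $\xi(a,b) \leq K_{N}(a,b)$. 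The crux of the argument is therefore the location of a point at which (\ref{eq:Kanto_Y}) is tight; this is the step I expect to be the main obstacle.

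My candidate would be $v^{*} = 1/2^{N+1}$, the midpoint of the first dyadic subinterval $[0,1/2^{N}]$, at which Lemma \ref{lem:r_n} gives $r_{N}(v^{*}) = 1/2$. I would verify tightness by revisiting the proof of Theorem \ref{thm:Kantoro}: the inequality (\ref{eq:Kanto_Y}) is just $\varphi_{N}(v) \geq f(v)$ applied to $f(v) = K_{N}(a,b)^{r_{N}(v)} a^{1-v} b^{v}$, where $\varphi_{N}$ is the linear interpolant of $f$ at the nodes $k/2^{N}$. Since $r_{N}(k/2^{N}) = 0$, these nodal values reduce to $a^{1-k/2^{N}} b^{k/2^{N}}$, and the linear interpolant at the midpoint of $[0,1/2^{N}]$ is simply $\tfrac{1}{2}\bigl(a + a^{1-1/2^{N}} b^{1/2^{N}}\bigr)$, while $f(v^{*}) = \sqrt{K_{N}(a,b)}\, a^{1-1/2^{N+1}} b^{1/2^{N+1}}$. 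Writing $u = a$ and $w = a^{1-1/2^{N}} b^{1/2^{N}}$, a short calculation shows that $\varphi_{N}(v^{*}) = f(v^{*})$ is exactly the identity
\begin{equation*}
\frac{u+w}{2\sqrt{uw}} = \frac{a^{1/2^{N}} + b^{1/2^{N}}}{2(ab)^{1/2^{N+1}}} = \sqrt{K_{N}(a,b)},
\end{equation*}
which is built into the definition of the Kantorovich constant $K_{N}(a,b)$.

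Having established this tightness, I would subtract the resulting identity from (\ref{eq:xi_Kantoro}) evaluated at $v = v^{*}$, whereupon the common sum cancels and yields
\begin{equation*}
\bigl(\sqrt{K_{N}(a,b)} - \sqrt{\xi(a,b)}\bigr)\, a^{1-1/2^{N+1}} b^{1/2^{N+1}} \geq 0.
\end{equation*}
Since $a,b > 0$, the factor $a^{1-1/2^{N+1}} b^{1/2^{N+1}}$ is strictly positive, so this forces $\sqrt{\xi(a,b)} \leq \sqrt{K_{N}(a,b)}$ and hence $\xi(a,b) \leq K_{N}(a,b)$, as claimed. Apart from the identification of the tight point, every step is immediate algebraic cancellation.
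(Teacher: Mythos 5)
Your proof is correct and is essentially the paper's argument: the paper also identifies $(1-v)a+vb$ minus the correction sum as the linear interpolant at the nodes $k/2^{N}$ (via Lemma \ref{lem:varphi_interp}) and then tests the resulting condition $\Delta_{f}(N,k)\geq 0$ at a midpoint where $r_{N}=\tfrac12$, which reduces to exactly your identity $\xi(a,b)^{1/2}\sqrt{uw}\leq\tfrac12(u+w)$, i.e.\ $\xi\leq K_{N}$. The only cosmetic difference is that you specialize to the first dyadic subinterval ($k=1$, $v^{*}=1/2^{N+1}$) and phrase it as tightness of (\ref{eq:Kanto_Y}), whereas the paper states the equivalent midpoint condition for general $k$; both hinge on the same computation.
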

\begin{proof}
Let $f(v)=\xi(a,b)^{r_{N}(v)}a^{1-v}b^{v}$. Similarly to the proof of Theorem
\ref{thm:Kantoro}, we can show that $\Delta_{f}(n,k)=g_{n,k}(a,b)$
and that $f$ is convex on $I_{m}=[\frac{m-1}{2^{N+1}},\frac{m}{2^{N+1}}]$,
for $1\leq m\leq2^{N+1}$. Since
\[
(1-v)a+vb-\sum_{n=0}^{N-1}r_{n}(v)\sum_{k=1}^{2^{n}}g_{n,k}(a,b)\chi_{(\frac{k-1}{2^{n}},\frac{k}{2^{n}})}(v)
\]
is the linear interpolation of $f(v)$ at $v=k/2^{N}$, for $0\leq k\leq2^{N}$,
by Lemma \ref{lem:varphi_interp}, the inequality (\ref{eq:xi_Kantoro}) holds \textit{if
and only if $\Delta_{f}(N,k)\geq0$}, for $0\leq k\leq2^{N}$.

Now, let
$v_{k}=\frac{k}{2^{N}}$. Since $r_{N}(v_{k-1})=r_{N}(v_{k})=0$ and
$r_{N}(\frac{v_{k-1}+v_{k}}{2})=\frac{1}{2}$, the condition {$\Delta_{f}(N,k)\geq0$}
is equivalent to
\[
\xi(a,b)^{1/2}a^{1-(v_{k-1}+v_{k})/2}b^{(v_{k-1}+v_{k})/2}\leq\frac{1}{2}\left(a^{1-v_{k-1}}b^{v_{k-1}}+a^{1-v_{k}}b^{v_{k}}\right),
\]
that is,
\[
\xi(a,b)\leq\frac{\left(a^{1/2^{N}}+b^{1/2^{N}}\right)^{2}}{4(ab)^{1/2^{N}}}.
\]
Therefore we have $\xi(a,b)\leq K_{N}(a,b)$, $a,b>0$.
\end{proof}

The inequalities in (\ref{eq:Dragomir1}), due to Dragomir, can also be improved by virtue of Theorem \ref{lem:convex_lemma}.
\begin{thm}
Let $a,b>0$ , $0\leq v\leq1$, and let $N$ be a nonnegative integer. Then,
\begin{eqnarray}
 &  & (1-v)a+vb\geq a^{1-v}b^{v}+\sum_{n=0}^{N-1}r_{n}(v)\sum_{k=1}^{2^{n}}g_{n,k}(a,b)\chi_{(\frac{k-1}{2^{n}},\frac{k}{2^{n}})}(v)\label{eq:Dra_multi1}\\
 &  & \hfill\hphantom{aaaaaaaaaaaaaaaa}+\left(\frac{v(1-v)}{2}-\sum_{n=0}^{N-1}\frac{r_{n}(v)}{2^{n+2}}\right)(\ln\frac{b}{a})^{2}\min\{a,b\}\nonumber
\end{eqnarray}
and
\begin{eqnarray}
 &  & (1-v)a+vb\leq a+b-a^{v}b^{1-v}-\sum_{n=0}^{N-1}r_{n}(v)\sum_{k=1}^{2^{n}}g_{n,k}(b,a)\chi_{(\frac{k-1}{2^{n}},\frac{k}{2^{n}})}(v)\label{eq:Dra_multi1_rev}\\
 &  & \hfill\hphantom{aaaaaaaaaaaaaaaa}-\left(\frac{v(1-v)}{2}-\sum_{n=0}^{N-1}\frac{r_{n}(v)}{2^{n+2}}\right)\left(\ln\frac{b}{a}\right)^{2}\min\{a,b\},\nonumber
\end{eqnarray}
where the function $g_{n,k}$ is  defined in Theorem \ref{thm:Young_multiple}.\end{thm}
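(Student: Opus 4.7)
My plan is to derive both inequalities by applying Theorem \ref{lem:convex_lemma} to the auxiliary function
\[
f(v):=a^{1-v}b^{v}+\frac{1}{2}v(1-v)\left(\ln\frac{b}{a}\right)^{2}\min\{a,b\},
\]
which is tailor-made so that $f(0)=a$ and $f(1)=b$. Consequently the quantity $(1-v)f(0)+vf(1)$ appearing on the left in (\ref{eq:f_convex_ineq}) and (\ref{eq:f_convex_ineq_reverse}) coincides with the chord $(1-v)a+vb$ on the left of (\ref{eq:Dra_multi1}) and (\ref{eq:Dra_multi1_rev}).

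The key preliminary is to verify convexity of $f$ on $[0,1]$. A direct computation yields
\[
f''(v)=\left(\ln\frac{b}{a}\right)^{2}\left[a^{1-v}b^{v}-\min\{a,b\}\right],
\]
which is nonnegative since the weighted geometric mean $a^{1-v}b^{v}$ always dominates $\min\{a,b\}$ on $[0,1]$. Hence Theorem \ref{lem:convex_lemma} applies to $f$ directly, with no recourse to the piecewise-convex extension.

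Next I would compute $\Delta_{f}(n,k)$. By linearity of the second-difference operator,
\[
\Delta_{f}(n,k)=g_{n,k}(a,b)+\tfrac{1}{2}\left(\ln\tfrac{b}{a}\right)^{2}\min\{a,b\}\cdot\Delta_{h}(n,k),
\]
where $h(v)=v(1-v)$. Since $h$ is a quadratic polynomial, $\Delta_{h}(n,k)$ is a constant depending only on the dyadic spacing $2^{-n}$ and \emph{independent of} $k$, obtained by a one-line computation. Plugging this into (\ref{eq:f_convex_ineq}), invoking the partition-of-unity identity $\sum_{k=1}^{2^{n}}\chi_{((k-1)/2^{n},k/2^{n})}(v)=1$ (which holds off the dyadic grid), and regrouping the $\left(\ln\frac{b}{a}\right)^{2}\min\{a,b\}$ contributions produces (\ref{eq:Dra_multi1}).

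For the reverse inequality (\ref{eq:Dra_multi1_rev}) I would apply (\ref{eq:f_convex_ineq_reverse}) to the same $f$. The reindexing $k\mapsto 2^{n}-k+1$ turns $g_{n,k}(a,b)$ into $g_{n,k}(b,a)$, the $a\leftrightarrow b$ symmetry already exploited in the proof of Theorem \ref{thm:Young_multiple}, while the quadratic contribution is symmetric under this reindexing and simply changes sign in the reverse formulation. The main obstacle I anticipate is a purely bookkeeping one: verifying that the coefficient of $\left(\ln\frac{b}{a}\right)^{2}\min\{a,b\}$ collapses into exactly the closed-form expression stated in the theorem, which requires careful tracking of the dyadic normalization factor appearing in $\Delta_{h}(n,k)$.
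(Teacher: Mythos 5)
Your proposal is correct and is essentially the paper's own proof: the authors likewise introduce $f(v)=a^{1-v}b^{v}+\frac{1}{2}v(1-v)\left(\ln\frac{b}{a}\right)^{2}\min\{a,b\}$, verify $f^{\prime\prime}(v)=\left(\ln\frac{b}{a}\right)^{2}\left(a^{1-v}b^{v}-\min\{a,b\}\right)\geq0$, and feed the resulting $\Delta_{f}(n,k)$ into Theorem \ref{lem:convex_lemma}, using the same $k\mapsto 2^{n}-k+1$ symmetry for the reverse inequality. On the one piece of bookkeeping you flag: for $h(v)=v(1-v)$ one finds $\Delta_{h}(n,k)=-2^{-(2n+1)}$ independently of $k$, so the argument actually delivers the coefficient $\frac{v(1-v)}{2}-\sum_{n=0}^{N-1}\frac{r_{n}(v)}{2^{2n+2}}$ (consistent with the paper's displayed formula $\Delta_{f}(n,k)=g_{n,k}(a,b)-\frac{1}{2^{2n+2}}\left(\ln\frac{b}{a}\right)^{2}\min\{a,b\}$), which is at least as strong as, and hence implies, the $2^{n+2}$ version printed in the statement.
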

\begin{proof}
Putting $f(v)=a^{1-v}b^{v}+\frac{1}{2}v(1-v)\left(\ln\frac{b}{a}\right)^{2}\min\{a,b\}$, we have

\[
f^{\prime\prime}(v)=(\ln\frac{b}{a})^{2}\left(a^{1-v}b^{v}-\min\{a,b\}\right)\geq0,
\]
so the inequalities (\ref{eq:Dra_multi1}) and (\ref{eq:Dra_multi1_rev}) follows directly
from Theorem \ref{lem:convex_lemma}, since
\[
\Delta_{f}(n,k)=g_{n,k}(a,b)-\frac{1}{2^{2n+2}}\left(\ln\frac{b}{a}\right)^{2}\min\{a,b\}.
\]

\end{proof}
Note that the inequality (\ref{eq:Dragomir1}) follows from the above theorem for
$N=0$. It is very interesting to compare relations (\ref{eq:multiple_Y}) and (\ref{eq:Dra_multi1}).
It can be shown that if $N\geq2$, then
\[
\frac{v(1-v)}{2}\leq\sum_{n=0}^{N-1}\frac{r_{n}(v)}{2^{n+2}},
\]
for $0\leq v\leq1$. Thus, the inequality (\ref{eq:Dra_multi1}) is weaker than (\ref{eq:multiple_Y})
for $N\geq2$. On the other hand,  in the case when $N=1$, the relation (\ref{eq:Dra_multi1}) is stronger than (\ref{eq:multiple_Y}), since
\[
\frac{1}{2}v(1-v)-\frac{1}{4}r_{0}(v)=\frac{1}{4}r_{0}(v)|1-2v|\geq0.
\]
Similarly, the inequality
(\ref{eq:Dra_multi1_rev}) is stronger than the first inequality in
(\ref{eq:Young_reverse_multi}) when $N=1$, and we have the following
result.
\begin{cor}
\label{cor:Dra_better_N1}Let $a,b>0$ and $0\leq v\leq1$. Then,
\begin{equation}
(1-v)a+vb\geq a^{1-v}b^{v}+r_{0}(v)(\sqrt{a}-\sqrt{b})^{2}+\alpha(v)\zeta(a,b)\label{eq:Mincu_better1}
\end{equation}
and
\begin{eqnarray}
(1-v)a+vb & \leq & a+b-a^{v}b^{1-v}-r_{0}(v)(\sqrt{a}-\sqrt{b})^{2}-\alpha(v)\zeta(a,b)\label{eq:Mincul_better2}\\
 & \leq & a^{1-v}b^{v}+R_{0}(v)(\sqrt{a}-\sqrt{b})^{2}-\alpha(v)\zeta(a,b),\nonumber
\end{eqnarray}
where
\begin{eqnarray*}
\alpha(v) & = & \frac{1}{2}v(1-v)-\frac{1}{4}r_{0}(v)=\frac{1}{4}r_{0}(v)|1-2v|,\\
\zeta(a,b) & = & (\ln\frac{b}{a})^{2}\min\{a,b\}.
\end{eqnarray*}
Moreover, (\ref{eq:Mincu_better1}) and the first inequality in (\ref{eq:Mincul_better2})
are stronger than the corresponding ones in (\ref{eq:Mincule_1})
for $a,b\geq1$. \end{cor}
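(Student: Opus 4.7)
The plan is to obtain the corollary as a direct specialization of the preceding theorem (the one giving (\ref{eq:Dra_multi1}) and (\ref{eq:Dra_multi1_rev})) at $N=1$, then combine this with the arithmetic-geometric mean inequality and a trivial monotonicity argument to get the two remaining statements. There is no deep obstacle; the work is bookkeeping plus one case split.

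First, I would specialize the refined Dragomir theorem to $N=1$. In this case the double sum $\sum_{n=0}^{N-1}r_{n}(v)\sum_{k=1}^{2^{n}}g_{n,k}(a,b)\chi_{(\frac{k-1}{2^{n}},\frac{k}{2^{n}})}(v)$ collapses to the single term $r_{0}(v)g_{0,1}(a,b)\chi_{(0,1)}(v)=r_{0}(v)(\sqrt{a}-\sqrt{b})^{2}$, and the scalar coefficient of $\zeta(a,b)$ becomes $\frac{v(1-v)}{2}-\sum_{n=0}^{0}\frac{r_{n}(v)}{2^{n+2}}=\frac{v(1-v)}{2}-\frac{r_{0}(v)}{4}=\alpha(v)$. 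This instantly yields (\ref{eq:Mincu_better1}) and the first inequality in (\ref{eq:Mincul_better2}). The closed-form identity $\alpha(v)=\frac{1}{4}r_{0}(v)|1-2v|$ (and hence $\alpha(v)\geq 0$) is then a one-line case split on $v\leq 1/2$ versus $v>1/2$, using $r_{0}(v)=v$ or $r_{0}(v)=1-v$ accordingly; this is the same identity already displayed in the paragraph preceding the corollary, so I would simply invoke it.

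Second, for the chained upper bound in (\ref{eq:Mincul_better2}) I would subtract and show
\[
a+b-a^{v}b^{1-v}-r_{0}(v)(\sqrt{a}-\sqrt{b})^{2}\;\leq\;a^{1-v}b^{v}+R_{0}(v)(\sqrt{a}-\sqrt{b})^{2}.
\]
Using $a+b=(\sqrt{a}-\sqrt{b})^{2}+2\sqrt{ab}$ on the left and $r_{0}(v)+R_{0}(v)=1$ to collect the $(\sqrt{a}-\sqrt{b})^{2}$-terms, this reduces to $2\sqrt{ab}\leq a^{1-v}b^{v}+a^{v}b^{1-v}$, which is just AM-GM applied to the two positive numbers $a^{1-v}b^{v}$ and $a^{v}b^{1-v}$ (their geometric mean is $\sqrt{ab}$). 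This step is identical to how the last inequality in (\ref{eq:Young_reverse_multi}) was obtained in the proof of Theorem \ref{thm:Young_multiple}, so I can cite it.

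Finally, for the comparison with Minculete's inequalities (\ref{eq:Mincule_1}) under the hypothesis $a,b\geq 1$, I would simply note that $\min\{a,b\}\geq 1$, whence
\[
\zeta(a,b)=\Bigl(\ln\tfrac{b}{a}\Bigr)^{2}\min\{a,b\}\;\geq\;\Bigl(\ln\tfrac{b}{a}\Bigr)^{2}.
\]
Since $\alpha(v)\geq 0$, multiplying shows that the correction term $\alpha(v)\zeta(a,b)$ in (\ref{eq:Mincu_better1}) dominates the correction term $\alpha(v)(\ln\frac{b}{a})^{2}$ in the corresponding inequality of (\ref{eq:Mincule_1}), and analogously for the upper bound. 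The only mildly delicate point is making sure the sign of the correction lines up correctly in the reverse inequality — but in (\ref{eq:Mincul_better2}) the term $\alpha(v)\zeta(a,b)$ is \emph{subtracted} from an upper bound, so a larger $\zeta(a,b)$ gives a tighter upper bound, exactly as claimed.
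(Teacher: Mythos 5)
Your derivation of (\ref{eq:Mincu_better1}) and of both inequalities in (\ref{eq:Mincul_better2}) is correct and is exactly the paper's route: specialize the preceding theorem to $N=1$ (where the double sum collapses to $r_{0}(v)(\sqrt{a}-\sqrt{b})^{2}$ and the coefficient of $\zeta(a,b)$ becomes $\alpha(v)$) and append the arithmetic--geometric mean step $2\sqrt{ab}\leq a^{1-v}b^{v}+a^{v}b^{1-v}$. The comparison of the two \emph{lower} bounds is also the paper's argument verbatim: $\min\{a,b\}\geq1$ gives $\zeta(a,b)\geq(\ln\frac{b}{a})^{2}$, and those two bounds differ only in that factor.

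The gap is in your treatment of the upper-bound comparison, which happens to be the part the paper spends most of its proof on. The first inequality in (\ref{eq:Mincul_better2}) and the second inequality in (\ref{eq:Mincule_1}) are \emph{not} of the same shape: their main terms differ ($a+b-a^{v}b^{1-v}-r_{0}(v)(\sqrt{a}-\sqrt{b})^{2}$ versus $a^{1-v}b^{v}+R_{0}(v)(\sqrt{a}-\sqrt{b})^{2}$), and the correction enters with a minus sign in the former but a \emph{plus} sign in the latter. So your observation that ``a larger $\zeta$ gives a tighter upper bound'' only compares your bound against a weakened copy of itself; it never compares it against Minculete's bound, and ``analogously'' does not apply because the situation is not analogous to the lower-bound case. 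What actually has to be verified is
\[
a+b-a^{v}b^{1-v}-r_{0}(v)(\sqrt{a}-\sqrt{b})^{2}-\alpha(v)\zeta(a,b)\leq a^{1-v}b^{v}+R_{0}(v)(\sqrt{a}-\sqrt{b})^{2}+\alpha(v)\bigl(\ln\tfrac{b}{a}\bigr)^{2},
\]
which, after writing $a+b=2\sqrt{ab}+(\sqrt{a}-\sqrt{b})^{2}$, amounts to $2\sqrt{ab}\leq a^{1-v}b^{v}+a^{v}b^{1-v}+\alpha(v)\bigl[(\ln\frac{b}{a})^{2}+\zeta(a,b)\bigr]$. This does follow in one line from AM--GM together with $\alpha(v)\geq0$ --- equivalently, chain the second inequality of (\ref{eq:Mincul_better2}), which you already proved, with $-\alpha(v)\zeta(a,b)\leq\alpha(v)(\ln\frac{b}{a})^{2}$ --- so the claim is true and easily repaired, but the sentence you wrote is not that argument. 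For comparison, the paper proves a sharper statement at this point: it first replaces $\zeta(a,b)$ by $(\ln\frac{b}{a})^{2}$ using $a,b\geq1$ and then reduces the comparison to $2\sqrt{ab}\leq a^{1-v}b^{v}+a^{v}b^{1-v}+(v(1-v)-\frac{1}{4})(\ln\frac{b}{a})^{2}$, whose last coefficient is nonpositive; this is established by showing the right-hand side is a convex function of $v$, symmetric about $v=\frac{1}{2}$ and hence minimized there with value $2\sqrt{ab}$ --- and that convexity step is where the hypothesis $a,b\geq1$ genuinely enters.
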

\begin{proof}
The relations (\ref{eq:Mincu_better1}) and (\ref{eq:Mincul_better2}) follow
from (\ref{eq:Dra_multi1}) and (\ref{eq:Dra_multi1_rev}) with $N=1$
respectively, where the second inequality in (\ref{eq:Mincul_better2})
follows from the arithmetic-geometric mean inequality $2\sqrt{ab}\leq a^{v}b^{1-v}+a^{1-v}b^{v}$.

Now, assume that $a,b\geq1$. Since $\min\{a,b\}\geq1$, it is obvious that
(\ref{eq:Mincu_better1}) is stronger than the first inequality in
(\ref{eq:Mincule_1}). Moreover, from (\ref{eq:Mincul_better2}) we
have
\begin{eqnarray*}
(1-v)a+vb & \leq & a+b-a^{v}b^{1-v}-r_{0}(v)(\sqrt{a}-\sqrt{b})^{2}-\alpha(v)\left(\ln\frac{b}{a}\right)^{2}\\
 & = & 2\sqrt{ab}+R_{0}(v)(\sqrt{a}-\sqrt{b})^{2}-a^{v}b^{1-v}-\alpha(v)\left(\ln\frac{b}{a}\right)^{2}\\
 & = & a^{1-v}b^{v}+R_{0}(v)(\sqrt{a}-\sqrt{b})^{2}+\alpha(v)\left(\ln\frac{b}{a}\right)^{2}\\
 &  & +2\sqrt{ab}-a^{v}b^{1-v}-a^{1-v}b^{v}-\left(v(1-v)-\frac{1}{4}\right)\left(\ln\frac{b}{a}\right)^{2}.
\end{eqnarray*}
Thus, it suffices to show the relation
\[
2\sqrt{ab}\leq a^{1-v}b^{v}+a^{v}b^{1-v}+\left(v(1-v)-\frac{1}{4}\right)\left(\ln\frac{b}{a}\right)^{2}
\]
for $a,b\geq1$ and $0\leq v\leq1$. Denoting the right-hand side of the above inequality by
$f(v)$, we have
\[
f^{\prime\prime}(v)=\left(\ln\frac{b}{a}\right)^{2}\left(a^{1-v}b^{v}+a^{v}b^{1-v}-2\right).
\]
Since $a^{1-v}b^{v}+a^{v}b^{1-v}\geq2\sqrt{ab}\geq2$, it follows that $f$ is
convex. Moreover, since $f(v)=f(1-v)$, $f$ attains
its minimum value at $v=\frac{1}{2}$, that is, $f(v)\geq f(\frac{1}{2})=2\sqrt{ab}$.
\end{proof}

Now, our aim is to improve the series of inequalities in (\ref{eq:Specht_ineq}) which includes the Specht ratio.
Note that the Specht ratio $S(t)=t^{1/(t-1)}/(e\ln t^{1/(t-1)})$
has the following properties (see e.g. \cite{Tominaga}):
\begin{itemize}
\item $S(1)=\lim_{t\to1}S(t)=1$ and $S(t)=S(t^{-1})$ for $t>0$.
\item $S^{\prime}(t)<0$ for $0<t<1$ and $S^{\prime}(t)>0$ for $t>1$.
\end{itemize}
Before the corresponding improvement, we first give an auxiliary result regarding the Specht ratio.
\begin{lem}
\label{lem:Dt}Let $S(t)$ be the Specht ratio and define $D(t)$
by
\[
D(t)=\frac{1}{2}(t+t^{-1}),
\]
for $t>0$. Then,
\begin{enumerate}
\item $S(t)\leq D(t)$ for $t>0$,
\item For any $c>0$, $f(v)=D(c^{r_{0}(v)})c^{v}$ is convex on $[0,\frac{1}{2}]$
and $[\frac{1}{2},1]$. Moreover, $f(\frac{1}{2})=\frac{1}{2}(f(0)+f(1))$.
\end{enumerate}
\end{lem}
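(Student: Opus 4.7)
My plan is to handle part (2) first, as it dispatches by a direct computation. On $[0,1/2]$ we have $r_0(v) = v$, so
\[
f(v) = D(c^v)\,c^v = \tfrac{1}{2}(c^v + c^{-v})\,c^v = \tfrac{1}{2}(c^{2v}+1),
\]
which is manifestly convex, being a constant plus a nonnegative multiple of an exponential in $v$. On $[1/2,1]$ we instead have $r_0(v) = 1-v$, giving $f(v) = \tfrac{1}{2}(c + c^{2v-1})$, again convex by the same reasoning. The identity $f(\tfrac{1}{2}) = \tfrac{1}{2}(f(0) + f(1))$ is then immediate from $f(0) = 1$, $f(1) = c$, and $f(\tfrac{1}{2}) = (c+1)/2$.

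For part (1), the inequality $S(t) \leq D(t)$, my preferred route is to interpose the Kantorovich constant $K(t) = (1+t)^{2}/(4t)$ and argue the chain $S(t) \leq K(t) \leq D(t)$. The right inequality is elementary, since
\[
D(t) - K(t) = \frac{2(t^{2}+1) - (t+1)^{2}}{4t} = \frac{(t-1)^{2}}{4t} \geq 0,
\]
with equality only at $t=1$. The left inequality $S(t) \leq K(t)$ is a standard comparison between the Specht and Kantorovich ratios, available from the Specht-ratio literature cited earlier in the paper.

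Should quoting $S(t) \leq K(t)$ prove inconvenient, my fallback is to establish $S(t) \leq D(t)$ directly by calculus. Using $S(t) = S(t^{-1})$ and $D(t) = D(t^{-1})$, reduce to $t \geq 1$; setting $u = \ln t \geq 0$ and rewriting $S(t) = L(t)\,e^{1/L(t)-1}$ with $L(t) = (e^{u}-1)/u$, the claim becomes
\[
G(u) := \ln \cosh u - \ln L(t) - \frac{1}{L(t)} + 1 \geq 0 \qquad (u \geq 0).
\]
One checks $G(0) = 0$ from the standard limit $(e^{u}-1)/u \to 1$, and after using $e^{u} - 1 = 2e^{u/2}\sinh(u/2)$ the derivative simplifies to
\[
G'(u) = \tanh u - \coth(u/2) + \frac{1}{u} + \frac{u}{4\sinh^{2}(u/2)}.
\]
The main obstacle is then to verify $G'(u) \geq 0$ on $(0,\infty)$: this requires clearing denominators over $4u\sinh^{2}(u/2)\cosh u$ and showing the resulting numerator is nonnegative, a delicate but purely mechanical rearrangement (alternatively attackable by Taylor expansion combined with large-$u$ asymptotics). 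This is precisely the step that motivates preferring the Kantorovich-mediated chain above.
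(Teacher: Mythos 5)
Part (2) of your argument is exactly the paper's: on $[0,\tfrac12]$ one gets $f(v)=\tfrac12(c^{2v}+1)$, on $[\tfrac12,1]$ one gets $f(v)=\tfrac12(c+c^{2v-1})$, and $f(\tfrac12)=\tfrac12(1+c)$. Nothing to add there.

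For part (1) your primary route is genuinely different from the paper's, but as written it has a gap precisely where all the difficulty lives. The chain $S(t)\leq K(t)\leq D(t)$ is attractive, and your verification of $K\leq D$ is correct; however the load-bearing half, $S(t)\leq K(t)$, is only asserted ``from the Specht-ratio literature cited earlier in the paper.'' The paper's bibliography contains only \cite{Furuichi} and \cite{Tominaga} on the Specht ratio, and neither is obviously a citable source for exactly $S(t)\leq K(t)$; moreover that inequality is \emph{stronger} than the target $S(t)\leq D(t)$ (your own computation shows $K\leq D$ with a strict gap away from $t=1$), so deferring to it does not make the problem easier --- it makes it harder, unless you produce a precise reference. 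Your fallback direct proof is set up correctly (the reduction to $t\geq1$, the function $G(u)=\ln\cosh u-\ln L-1/L+1$, and the displayed formula for $G'(u)$ all check out), but it stops at ``verify $G'(u)\geq0$,'' which you explicitly leave as a delicate rearrangement. That is the entire content of the lemma, so the proof is incomplete.

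For comparison, the paper proves $S\leq D$ directly and self-containedly: after reducing to $t>1$ and taking logarithms, it must show
\[
\psi^{\prime}(t)=\frac{\ln t}{(t-1)^{2}}+\frac{1}{t\ln t}-\frac{2(t+1)}{(t^{2}+1)(t-1)}\geq0,
\]
and the key device is the AM--GM bound $\frac{\ln t}{(t-1)^{2}}+\frac{1}{t\ln t}\geq\frac{2}{\sqrt{t}\,(t-1)}$, which eliminates the logarithm entirely and reduces the claim to the algebraic identity
\[
\frac{2}{\sqrt{t}(t-1)}-\frac{2(t+1)}{(t^{2}+1)(t-1)}=\frac{2(t\sqrt{t}-1)}{(t+\sqrt{t})(t^{2}+1)}>0,
\]
after which $\psi(1^{+})=0$ finishes the argument. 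An analogous trick (pairing the two terms of $G'$ that contain $u$ transcendentally and bounding their sum by AM--GM) is what your fallback is missing; I would either import that device or replace your citation of $S\leq K$ with an actual proof.
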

\begin{proof}
Since $S(t^{-1})=S(t)$ and $D(t^{-1})=D(t)$, for $t>0$, we will
show $S(t)\leq D(t)$ for $t>1$. Taking a natural logarithm, we
can show that $S(t)\leq D(t)\iff\psi(t)\geq0$, where
\[
\psi(t)=\ln(t^{2}+1)-\ln(2t)-\ln(t-1)-\frac{\ln t}{t-1}+1+\ln\ln t.
\]
A direct computation yields
\begin{eqnarray*}
\psi^{\prime}(t) & = & \frac{\ln t}{(t-1)^{2}}+\frac{1}{t\ln t}-\frac{2(t+1)}{(t^{2}+1)(t-1)}\\
 & \geq & \frac{2}{\sqrt{t}(t-1)}-\frac{2(t+1)}{(t^{2}+1)(t-1)}\\
 & = & 2\frac{t\sqrt{t}-1}{(t+\sqrt{t})(t^{2}+1)}>0,
\end{eqnarray*}
for $t>1$. Since $\lim_{t\to1+}\psi(t)=0$, it follows that $\psi(t)\geq0$ for $t\geq1$.

The convexity of $f$ is obvious, since
\[
f(v)=\begin{cases}
\frac{1}{2}(c^{2v}+1), & 0\leq v\leq\frac{1}{2}\\
\frac{1}{2}(c+c^{2v-1}), & \frac{1}{2}<v\leq1
\end{cases}.
\]
Finally, $f(\frac{1}{2})=D(\sqrt{c})\sqrt{c}=\frac{1}{2}(1+c)=\frac{1}{2}(f(0)+f(1))$.
\end{proof}

Now, the following improvement of the series of inequalities in (\ref{eq:Specht_ineq}) is also based on our Theorem \ref{lem:convex_lemma}.

\begin{thm}
\label{thm:Specht_ineq}Let $a,b>0$ and $0\leq v\leq1$. If  $N$ is a nonnegative integer, then
\begin{eqnarray}
 &  & (1-v)a+vb\geq S(c^{r_{0}(v)})a^{1-v}b^{v}\label{eq:Spetcht_better}\\
 &  & \hfill\hphantom{aaaaaaaaa}+\sum_{n=0}^{N-1}r_{n}(v)\sum_{k=1}^{2^{n}}\Delta_{f}(n,k)\chi_{(\frac{k-1}{2^{n}},\frac{k}{2^{n}})}(v)\nonumber
\end{eqnarray}
and
\begin{eqnarray}
 &  & (1-v)a+vb\leq a+b-a^{v}b^{1-v}S(c^{r_{0}(v)})\label{eq:Spetcht_rev_better}\\
 &  & \hfill\hphantom{aaaaaaaaa}-\sum_{n=0}^{N-1}r_{n}(v)\sum_{k=1}^{2^{n}}\Delta_{f}(n,2^{n}-k+1)\chi_{(\frac{k-1}{2^{n}},\frac{k}{2^{n}})}(v),\nonumber
\end{eqnarray}
where $c=a^{-1}b$ and $f(v)=S(c^{r_{0}(v)})a^{1-v}b^{v}$.\end{thm}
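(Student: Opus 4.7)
The plan is to apply Theorem \ref{lem:convex_lemma} to the function
\[
f(v) = S(c^{r_{0}(v)})\,a^{1-v}b^{v},
\]
with $c = a^{-1}b$. Since $f(0)=a$, $f(1)=b$, and the symmetry $r_{0}(v)=r_{0}(1-v)$ yields $f(1-v)=S(c^{r_{0}(v)})a^{v}b^{1-v}$, the two inequalities (\ref{eq:Spetcht_better}) and (\ref{eq:Spetcht_rev_better}) are precisely the forward and reverse inequalities of Theorem \ref{lem:convex_lemma} rewritten for this $f$. To invoke that theorem I will use the piecewise-convex version discussed in the remark after its proof, which requires verifying (i) that $f$ is convex on each of $[0,\frac{1}{2}]$ and $[\frac{1}{2},1]$, and (ii) that $f(\frac{1}{2})\le \frac{1}{2}(f(0)+f(1))$.

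The midpoint condition (ii) is immediate: $f(\frac{1}{2})=S(\sqrt{c})\sqrt{ab}$ while $\frac{1}{2}(f(0)+f(1))=\frac{a+b}{2}=D(\sqrt{c})\sqrt{ab}$, so (ii) reduces to $S(\sqrt{c})\le D(\sqrt{c})$, which is exactly Lemma \ref{lem:Dt}(1).

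The convexity claim (i) is the technical heart. Setting $\Phi(t)=tS(t)$, one has $f(v)=a\,\Phi(c^{v})$ on $[0,\frac{1}{2}]$; the symmetry $S(t)=S(1/t)$ combined with the substitution $v\mapsto 1-v$ shows that on $[\frac{1}{2},1]$ one has $f(v)=b\,\Phi(c^{v-1})$. Hence it suffices to show that $u\mapsto\Phi(e^{u})$ is convex on $\mathbb{R}$, which I plan to obtain through log-convexity. A short computation from the identity $\ln S(t)=\frac{\ln t}{t-1}-1-\ln\frac{\ln t}{t-1}$ yields
\[
\ln\Phi(e^{u}) = \frac{u}{1-e^{-u}} + \ln\frac{e^{u}-1}{u} - 1.
\]
The second summand has second derivative $\frac{1}{u^{2}}-\frac{e^{u}}{(e^{u}-1)^{2}}$, which is nonnegative because $(e^{u}-1)^{2}\ge u^{2}e^{u}$ is equivalent to $2\sinh(u/2)\ge|u|$. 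For the first summand $F(u)=u/(1-e^{-u})$, direct differentiation gives $F''(u)=e^{u}(e^{u}+1)(u-2\tanh(u/2))/(e^{u}-1)^{3}$, and this is nonnegative for every $u\neq 0$ because $|\tanh x|\le|x|$ forces numerator and denominator to carry matching signs. Summing the two convex pieces and passing from log-convexity to convexity completes (i).

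With (i) and (ii) verified, Theorem \ref{lem:convex_lemma} applied to this $f$ delivers (\ref{eq:Spetcht_better}) and (\ref{eq:Spetcht_rev_better}), the differences $\Delta_{f}(n,k)$ being precisely those declared in the statement. The principal obstacle is the convexity verification for $\Phi(e^{u})$; once secured, the rest of the argument is a direct appeal to the general Jensen-type refinement of Section~2.
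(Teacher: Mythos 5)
Your proposal is correct, and it reaches the theorem by the same overall scaffolding as the paper --- reduce to the piecewise-convex form of Theorem \ref{lem:convex_lemma} on $[0,\frac12]$ and $[\frac12,1]$, and settle the midpoint condition $f(\frac12)\le\frac12(f(0)+f(1))$ via $S(\sqrt{c})\le D(\sqrt{c})$ from Lemma \ref{lem:Dt} --- but the technical core, the convexity of $v\mapsto c^{v}S(c^{r_0(v)})$ on each half-interval, is handled by a genuinely different and cleaner argument. The paper differentiates $g=c^{v}h$ with $h=c^{x}/x$, $x=r_0/(c^{r_0}-1)$ directly, reduces positivity of $g''$ to a polynomial-type inequality $\xi(t)\ge 0$ in $t$ and $s=\ln t$, and verifies it by five successive differentiations $\xi_1,\dots,\xi_5$. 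You instead observe that on each half-interval $f$ is a positive constant times $\Phi(c^{w})$ with $\Phi(t)=tS(t)$ and $w$ affine in $v$, and prove the stronger statement that $u\mapsto\Phi(e^{u})$ is \emph{log}-convex, by splitting $\ln\Phi(e^{u})=\frac{u}{1-e^{-u}}+\ln\frac{e^{u}-1}{u}-1$ into two summands whose second derivatives are nonnegative by the elementary inequalities $|\sinh x|\ge|x|$ and $|\tanh x|\le|x|$ (your computations of both second derivatives check out, and the identity $(e^u+1)(u-2\tanh(u/2))=u(e^u+1)-2(e^u-1)$ is correct). This buys a shorter, more conceptual verification, a slightly stronger intermediate fact (log-convexity of $tS(t)$ in $\ln t$), and a uniform treatment of all $c>0$ without the paper's reduction to $c>1$ via $f_{-c}(v)=c^{-1}f_{c}(1-v)$. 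Two cosmetic points only: the inequality you quote should read $2|\sinh(u/2)|\ge|u|$ to cover $u<0$, and you should note that the removable singularity at $u=0$ is harmless since both summands extend analytically there, so nonnegativity of the second derivatives passes to $u=0$ by continuity.
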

\begin{proof}
For $c>0$, let $f_{c}(v)=c^{v}S(c^{r_{0}(v)})$. We will show that
$f_{c}$ is convex on $[0,\frac{1}{2}]$ and $[\frac{1}{2},1]$. Since
\[
f_{-c}(v)=c^{-v}S(c^{r_{0}(v)})=c^{-1}c^{1-v}S(c^{r_{0}(1-v)})=c^{-1}f_{c}(1-v),
\]
we may assume  $c>1$ and show that $g(v)\equiv e(\ln c)f_{c}(v)$
is convex on $[0,\frac{1}{2}]$ and $[\frac{1}{2},1]$. From now on,
we will write any function $\alpha(v)$ simply as $\alpha$, for a convenience.
Letting $x=r_{0}/(c^{r_{0}}-1)$, $h=c^{x}/x$, and $g=c^{v}h$, a
straightforward computation yields
\begin{eqnarray*}
h^{\prime} & = & x^{\prime}\left(\ln c-\frac{1}{x}\right)h,\\
h^{\prime\prime} & = & x^{\prime\prime}\left(\ln c-\frac{1}{x}\right)h+\left(\frac{x^{\prime}}{x}\right)^{2}h+x^{\prime}\left(\ln c-\frac{1}{x}\right)h^{\prime}\\
 & = & h\left[x^{\prime\prime}\left(\ln c-\frac{1}{x}\right)+\left(\frac{x^{\prime}}{x}\right)^{2}+(x^{\prime})^{2}\left(\ln c-\frac{1}{x}\right)^{2}\right]
\end{eqnarray*}
and
\begin{eqnarray*}
g^{\prime} & = & c^{v}\left(h\ln c+h^{\prime}\right),\\
g^{\prime\prime} & = & c^{v}\left(h(\ln c)^{2}+2h^{\prime}\ln c+h^{\prime\prime}\right)\\
 & = & c^{v}h\left(\left[\ln c+x^{\prime}(\ln c-\frac{1}{x})\right]^{2}+\frac{1}{x^{2}}\left[xx^{\prime\prime}\left(x\ln c-1\right)+(x^{\prime})^{2}\right]\right).
\end{eqnarray*}
Thus, it suffices to show that
\begin{equation}
xx^{\prime\prime}\left(x\ln c-1\right)+(x^{\prime})^{2}\geq0.\label{eq:show_x_ineq}
\end{equation}
Since
\begin{eqnarray*}
x^{\prime} & = & r_{0}^{\prime}\left(\frac{1}{c^{r_{0}}-1}-\frac{r_{0}c^{r_{0}}\ln c}{(c^{r_{0}}-1)^{2}}\right)=\pm\frac{x}{r_{0}}\left(1-xc^{r_{0}}\ln c\right),\\
x^{\prime\prime} & = & \pm\frac{\left[x^{\prime}\left(1-2xc^{r_{0}}\ln c\right)-x^{2}c^{r_{0}}r_{0}^{\prime}(\ln c)^{2}\right]r_{0}-x\left(1-xc^{r_{0}}\ln c\right)r_{0}^{\prime}}{r_{0}^{2}}\\
 & = & \frac{x^{2}}{r_{0}^{2}}\left((2xc^{r_{0}}-r_{0})\ln c-2\right)c^{r_{0}}\ln c,
\end{eqnarray*}
the relation (\ref{eq:show_x_ineq}) can be rewritten as
\[
\left((2xc^{r_{0}}-r_{0})\ln c-2\right)\left(x\ln c-1\right)xc^{r_{0}}\ln c+\left(1-xc^{r_{0}}\ln c\right)^{2}\geq0.
\]
Replacing $x$ by $r_{0}/(c^{r_{0}}-1)$ and denoting $c^{r_{0}}$
by $t$, the above inequality reads
\[
\left(\frac{t+1}{t-1}\ln t-2\right)\left(\frac{\ln t}{t-1}-1\right)\frac{t\ln t}{t-1}+\left(1-\frac{t\ln t}{t-1}\right)^{2}\geq0
\]
for $t>1$. Multiplying by $(t-1)^{3}$ and letting $s=\ln t$, the
above expression becomes
\[
\left((t+1)s-2(t-1)\right)\left(s-t+1\right)ts+(t-1)\left(t-1-ts\right)^{2}\geq0,
\]
or equivalently,
\[
\xi(t)\equiv t^{3}+\left(s^{3}-3s^{2}-3\right)t^{2}+(s^{3}+3s^{2}+3)t-1\geq0
\]
for $t>1$. A straightforward computation shows
\begin{eqnarray*}
\xi_{1}=\xi^{\prime} & = & 3t^{2}+(2s^{3}-3s^{2}-6s-6)t+s^{3}+6s^{2}+6s+3,\\
\xi_{2}=t\xi_{1}^{\prime} & = & 6t^{2}+(2s^{3}+3s^{2}-12s-12)t+3s^{2}+12s+6,\\
\xi_{3}=t\xi_{2}^{\prime} & = & 12t^{2}+(2s^{3}+9s^{2}-6s-24)t+6s+12,\\
\xi_{4}=t\xi_{3}^{\prime} & = & 24t^{2}+(2s^{3}+15s^{2}+12s-30)t+6,\\
\xi_{5}=t\xi_{4}^{\prime} & = & 48t+2s^{3}+21s^{2}+42s-18.
\end{eqnarray*}
Since $t>1$ and $s>0$, it follows that $\xi_{5}>0$. Thus, $\xi(t)\geq0$ results
from
\[
\xi_{4}(1)=\cdots=\xi_{1}(1)=\xi(1)=0.
\]
We have shown that $f_{c}$ is convex on $[0,\frac{1}{2}]$ and $[\frac{1}{2},1]$.
Now, by Lemma \ref{lem:Dt} it follows that $\frac{1}{2}(\sqrt{c}+\sqrt{c}^{-1})\geq S(\sqrt{c})$
which is equivalent to $f(0)+f(1)\geq2f(\frac{1}{2})$. Thus, $f_{c}$
satisfies
\begin{eqnarray*}
1-v+vc & \geq & c^{v}S(c^{r_{0}(v)})-\sum_{n=0}^{N-1}r_{n}(v)\sum_{k=1}^{2^{n}}\Delta_{f_{c}}(n,k)\chi_{(\frac{k-1}{2^{n}},\frac{k}{2^{n}})}(v),\\
1-v+vc & \leq & 1+c-c^{1-v}S(c^{r_{0}(v)})-\sum_{n=0}^{N-1}r_{n}(v)\sum_{k=1}^{2^{n}}\Delta_{f_{c}}(n,2^{n}-k+1)\chi_{(\frac{k-1}{2^{n}},\frac{k}{2^{n}})}(v),
\end{eqnarray*}
by Theorem \ref{lem:convex_lemma}. Finally, letting $c=a^{-1}b$, we obtain (\ref{eq:Spetcht_better})
and (\ref{eq:Spetcht_rev_better}).
\end{proof}

In order to conclude this section, we give yet another improvement of the Young inequality, based on Theorem \ref{lem:convex_lemma} and Lemma \ref{lem:Dt}.
\begin{thm}
\label{thm:D_thm}Let $a,b>0$ and let $N$ be a nonnegative integer.
Define $g_{a,b}(v)$ by
$
g_{a,b}(v)=a^{1-2v}b^{2v},
$
 $0\leq v\leq1$.
\begin{enumerate}
\item If $0\leq v\leq\frac{1}{2},$ then
\begin{eqnarray*}
(1-v)a+vb & \geq & \frac{1}{2}(a^{1-2v}b^{2v}+a)\\
 &  & +\frac{1}{2}\sum_{n=1}^{N-1}r_{n}(v)\sum_{k=1}^{2^{n-1}}\Delta_{g_{a,b}}(n,k)\chi_{(\frac{k-1}{2^{n}},\frac{k}{2^{n}})}(v),\\
(1-v)a+vb & \leq & a+\frac{1}{2}b-\frac{1}{2}a^{2v}b^{1-2v}\\
 &  & -\frac{1}{2}\sum_{n=1}^{N-1}r_{n}(v)\sum_{k=1}^{2^{n-1}}\Delta_{g_{b,a}}(n,k)\chi_{(\frac{k-1}{2^{n}},\frac{k}{2^{n}})}(v).
\end{eqnarray*}

\item If $\frac{1}{2}<v\leq1$, then
\begin{eqnarray*}
(1-v)a+vb & \geq & \frac{1}{2}(a^{2-2v}b^{2v-1}+b)\\
 &  & +\frac{1}{2}\sum_{n=1}^{N-1}r_{n}(v)\sum_{k=1}^{2^{n-1}}\Delta_{g_{a,b}}(n,k)\chi_{(\frac{k-1}{2^{n}},\frac{k}{2^{n}})}(v-\frac{1}{2}),\\
(1-v)a+vb & \leq & \frac{1}{2}a+b-\frac{1}{2}a^{2v-1}b^{2-2v}\\
 &  & -\frac{1}{2}\sum_{n=1}^{N-1}r_{n}(v)\sum_{k=1}^{2^{n-1}}\Delta_{g_{b,a}}(n,k).\chi_{(\frac{k-1}{2^{n}},\frac{k}{2^{n}})}(v-\frac{1}{2}).
\end{eqnarray*}

\end{enumerate}
\end{thm}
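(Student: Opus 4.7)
The plan is to apply Theorem \ref{lem:convex_lemma}, in the piecewise-convex form noted in the remark following that theorem, to the auxiliary function
\[
h(v) = a\, D\!\left(c^{r_{0}(v)}\right) c^{v}, \qquad c = b/a,
\]
where $D(t) = \tfrac{1}{2}(t + t^{-1})$. By Lemma \ref{lem:Dt}(2), $h$ is convex on $[0,\tfrac{1}{2}]$ and on $[\tfrac{1}{2},1]$, and satisfies the balance condition $h(\tfrac{1}{2}) = \tfrac{1}{2}(h(0)+h(1))$. Substituting the piecewise definition of $r_{0}(v)$ gives $h(v) = \tfrac{1}{2}(a^{1-2v}b^{2v} + a)$ for $v \in [0,\tfrac{1}{2}]$ and $h(v) = \tfrac{1}{2}(a^{2-2v}b^{2v-1} + b)$ for $v \in [\tfrac{1}{2},1]$, so in particular $h(0) = a$ and $h(1) = b$. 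An analogous computation yields $h(1-v) = \tfrac{1}{2}(b + a^{2v}b^{1-2v})$ for $v \in [0,\tfrac{1}{2}]$ and $h(1-v) = \tfrac{1}{2}(a + a^{2v-1}b^{2-2v})$ for $v \in [\tfrac{1}{2},1]$; these are exactly the closed-form pieces appearing in the upper bounds of Parts~(1) and (2) via the identity $a+b - h(1-v)$.

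Since $\Delta_{h}(0,1) = h(0) + h(1) - 2h(\tfrac{1}{2}) = 0$, and for every $n \geq 1$ each dyadic interval $[(k-1)/2^{n},k/2^{n}]$ lies entirely in one of the two halves on which $h$ is convex, we obtain $\Delta_{h}(n,k) \geq 0$ for every $n \geq 0$ and $1 \leq k \leq 2^{n}$. Thus the piecewise extension of Theorem \ref{lem:convex_lemma} applies to $h$ at every level $N$, producing the forward bound
\[
(1-v)a + vb \geq h(v) + \sum_{n=0}^{N-1} r_{n}(v)\sum_{k=1}^{2^{n}} \Delta_{h}(n,k)\,\chi_{((k-1)/2^{n},k/2^{n})}(v)
\]
together with its reverse counterpart, in which $h(v)$ is replaced by $a+b-h(1-v)$ and $\Delta_{h}(n,k)$ by $-\Delta_{h}(n,2^{n}-k+1)$.

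What remains is algebraic bookkeeping. When all three nodes of $\Delta_{h}(n,k)$ lie in $[0,\tfrac{1}{2}]$ (equivalently, $k \leq 2^{n-1}$), the additive constant $a$ in $h = \tfrac{1}{2}(g_{a,b} + a)$ cancels in the second difference, giving $\Delta_{h}(n,k) = \tfrac{1}{2}\Delta_{g_{a,b}}(n,k)$. A symmetric computation on the right half, using $h(v) = \tfrac{1}{2}(a^{2-2v}b^{2v-1} + b)$, shows that $\Delta_{h}(n,2^{n}-k+1) = \tfrac{1}{2}\Delta_{g_{b,a}}(n,k)$ for $1 \leq k \leq 2^{n-1}$, since the reflection $j \mapsto 2^{n}-j+1$ carries left-half indices to right-half nodes where the constant $b$ cancels. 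In Part~(1), where $v \in [0,\tfrac{1}{2}]$, the characteristic functions activate only left-half indices $k \leq 2^{n-1}$, the $n=0$ term vanishes, and substituting the two identities produces exactly the stated pair of inequalities. In Part~(2), where $v \in [\tfrac{1}{2},1]$, the active indices are the right-half ones $j = k + 2^{n-1}$, and re-indexing converts $\chi_{((j-1)/2^{n},j/2^{n})}(v)$ into $\chi_{((k-1)/2^{n},k/2^{n})}(v-\tfrac{1}{2})$; the mirror versions of the second-difference identities then deliver the remaining two inequalities.

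The main obstacle is purely combinatorial: tracking which half of $[0,1]$ each dyadic node lies in, and matching the index reflection $k \mapsto 2^{n}-k+1$ from the reverse form of Theorem \ref{lem:convex_lemma} with the swap $g_{a,b} \leftrightarrow g_{b,a}$ in the statement, together with the translation $v \mapsto v - \tfrac{1}{2}$ for Part~(2). No new analytic estimate is required beyond Lemma \ref{lem:Dt}(2); the convexity on each half together with the balance condition at $v = \tfrac{1}{2}$ supplies everything needed for the piecewise form of the main theorem to fire.
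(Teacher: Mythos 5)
Your proposal is correct and follows essentially the same route as the paper: the authors likewise apply Theorem \ref{lem:convex_lemma} (in its piecewise-convex form) together with Lemma \ref{lem:Dt} to the function $f(v)=D(c^{r_{0}(v)})a^{1-v}b^{v}$, observe that $\Delta_{f}(0,1)=0$ so the sum starts at $n=1$, and then carry out the same second-difference identities and index re-labelling (including the shift $v\mapsto v-\tfrac{1}{2}$ on the right half). No substantive differences.
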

\begin{proof}
Utilizing Theorem \ref{lem:convex_lemma} and Lemma \ref{lem:Dt} with
\begin{eqnarray*}
f(v) & = & D(c^{r_{0}(v)})a^{1-v}b^{v}\\
 & = & \frac{1}{2}(a^{-r_{0}(v)+1-v}b^{r_{0}(v)+v}+a^{r_{0}(v)+1-v}b^{-r_{0}(v)+v})\\
 & = & \begin{cases}
\frac{1}{2}(a^{1-2v}b^{2v}+a), & 0\leq v\leq\frac{1}{2}\\
\frac{1}{2}(a^{2-2v}b^{2v-1}+b), & \frac{1}{2}<v\leq1
\end{cases},
\end{eqnarray*}
we have
\begin{eqnarray}
(1-v)a+vb & \geq & f(v)\label{eq:D_v1}\\
 &  & +\sum_{n=1}^{N-1}r_{n}(v)\sum_{k=1}^{2^{n}}\Delta_{f}(n,k)\chi_{(\frac{k-1}{2^{n}},\frac{k}{2^{n}})}(v),\nonumber \\
(1-v)a+vb & \leq & a+b-f(1-v)\nonumber \\
 &  & -\sum_{n=1}^{N-1}r_{n}(v)\sum_{k=1}^{2^{n}}\Delta_{f}(n,2^{n}-k+1)\chi_{(\frac{k-1}{2^{n}},\frac{k}{2^{n}})}(v).\nonumber
\end{eqnarray}
Note  that the outer summation starts at $n=1$, since $\Delta_{f}(0,1)=0$.

If $0\leq v\leq\frac{1}{2},$ then  $f(v)=\frac{1}{2}(a^{1-2v}b^{2v}+a)$
and $f(1-v)=\frac{1}{2}(a^{2v}b^{1-2v}+b)$. Further, taking into account (\ref{eq:D_v1}), we have
\begin{eqnarray*}
(1-v)a+vb & \geq & \frac{1}{2}(a^{1-2v}b^{2v}+a)\\
 &  & +\sum_{n=1}^{N-1}r_{n}(v)\sum_{k=1}^{2^{n-1}}\Delta_{f}(n,k)\chi_{(\frac{k-1}{2^{n}},\frac{k}{2^{n}})}(v),\\
(1-v)a+vb & \leq & a+\frac{1}{2}b-\frac{1}{2}a^{2v}b^{1-2v}\\
 &  & -\sum_{n=1}^{N-1}r_{n}(v)\sum_{k=1}^{2^{n-1}}\Delta_{f}(n,2^{n}-k+1)\chi_{(\frac{k-1}{2^{n}},\frac{k}{2^{n}})}(v).
\end{eqnarray*}
Finally, since $1\leq k\leq2^{n-1}$, it follows that
\begin{eqnarray*}
\Delta_{f}(n,k) & = & \frac{1}{2}\Delta_{g_{a,b}}(n,k),\\
\Delta_{f}(n,2^{n}-k+1) & = & f(1-\frac{k}{2^{n}})+f(1-\frac{k-1}{2^{n}})-2f(1-\frac{2k-1}{2^{n+1}})\\
 & = & \frac{1}{2}\Delta_{g_{b,a}}(n,k).
\end{eqnarray*}

On the other hand, if $\frac{1}{2}<v\leq1$, then $f(v)=\frac{1}{2}(a^{2-2v}b^{2v-1}+b)$
and $f(1-v)=\frac{1}{2}(a^{2v-1}b^{2-2v}+a)$. Thus, utilizing (\ref{eq:D_v1}) we have,
\begin{eqnarray*}
(1-v)a+vb & \geq & \frac{1}{2}(a^{2-2v}b^{2v-1}+b)\\
 &  & +\sum_{n=1}^{N-1}r_{n}(v)\sum_{k=2^{n-1}+1}^{2^{n}}\Delta_{f}(n,k)\chi_{(\frac{k-1}{2^{n}},\frac{k}{2^{n}})}(v)\\
 & = & \frac{1}{2}(a^{2-2v}b^{2v-1}+b)\\
 &  & +\sum_{n=1}^{N-1}r_{n}(v)\sum_{k=1}^{2^{n-1}}\Delta_{f}(n,k+2^{n-1})\chi_{(\frac{k-1}{2^{n}},\frac{k}{2^{n}})}(v-\frac{1}{2})
\end{eqnarray*}
and
\begin{eqnarray*}
(1-v)a+vb & \leq & \frac{1}{2}a+b-\frac{1}{2}a^{2v-1}b^{2-2v}\\
 &  & -\sum_{n=1}^{N-1}r_{n}(v)\sum_{k=2^{n-1}+1}^{2^{n}}\Delta_{f}(n,2^{n}-k+1)\chi_{(\frac{k-1}{2^{n}},\frac{k}{2^{n}})}(v)\\
 & = & \frac{1}{2}a+b-\frac{1}{2}a^{2v-1}b^{2-2v}\\
 &  & -\sum_{n=1}^{N-1}r_{n}(v)\sum_{k=1}^{2^{n-1}}\Delta_{f}(n,2^{n-1}-k+1)\chi_{(\frac{k-1}{2^{n}},\frac{k}{2^{n}})}(v-\frac{1}{2}).
\end{eqnarray*}
Finally, if $1\leq k\leq2^{n-1}$, we have
\begin{eqnarray*}
\Delta_{f}(n,k+2^{n-1}) & = & f(\frac{1}{2}+\frac{k-1}{2^{n}})+f(\frac{1}{2}+\frac{k}{2^{n}})-2f(\frac{1}{2}+\frac{2k-1}{2^{n+1}})\\
 & = & \frac{1}{2}\Delta_{g_{a,b}}(n,k)
\end{eqnarray*}
and
\begin{eqnarray*}
\Delta_{f}(n,2^{n-1}-k+1) & = & f(\frac{1}{2}-\frac{k}{2^{n}})+f(\frac{1}{2}-\frac{k-1}{2^{n}})-2f(\frac{1}{2}-\frac{2k-1}{2^{n+1}})\\
 & = & \frac{1}{2}\Delta_{g_{b,a}}(n,k),
\end{eqnarray*}
which completes the proof.
\end{proof}

\section{Applications to some matrix inequalities}

Our aim in this section is to discuss some matrix inequalities that correspond to scalar inequalities derived in the previous section.

Throughout this section, we will use $M_{n}$ for the set of $n\times n$
complex matrices, $M_{n}^{+}$ for the subset of $M_{n}$ consisting
of positive definite matrices, and $|||\cdot|||$ for any unitarily
invariant norm. For $A\in M_{n}$, $A>0$ ($A\geq0$) means that
$A$ is positive definite (semidefinite). For Hermitian matrices $A,B\in M_{n}$,
$A<B$ $(A\leq B)$ implies that $B-A$ is positive definite (semidefinite).
The absolute value of $A\in M_{n}$ will be defined by $|A|=(A^{*}A)^{1/2}$.

For $A,B\in M_{n}^{+}$ and $0\leq v\leq1$, the $v$-weighted arithmetic
mean and geometric mean of $A$ and $B$ are defined, respectively,
by
\begin{eqnarray*}
A\nabla_{v}B & = & (1-v)A+vB,\\
A\sharp_{v}B & = & A^{1/2}(A^{-1/2}BA^{-1/2})^{v}A^{1/2}.
\end{eqnarray*}
For convenience of notation, we use $A\nabla B$ for $A\nabla_{\frac{1}{2}}B$
and $A\sharp B$ for $A\sharp_{\frac{1}{2}}B$.

In order to obtain matrix inequalities from the corresponding scalar inequalities,
we will use the operator monotonicity of continuous functions, that
is, if $f$ is a real valued continuous function defined on the spectrum
of a self-adjoint operator $A$, then $f(t)\geq0$ for every $t$
in the spectrum of $A$ implies that $f(A)$ is a positive operator.

Matrix inequalities that correspond to Theorems \ref{thm:Young_multiple} and \ref{thm:Kantoro} have been already established in papers \cite{Choi_reverse,Sabab_Choi_Young}. Now,
we are going to discuss matrix inequalities that correspond to Corollary \ref{cor:Dra_better_N1}, closely connected to some recent matrix inequalities due to Dragomir.

In order to do this, we will first generalize the definition of the geometric mean $A\sharp_{v}B=A^{1/2}(A^{-1/2}BA^{-1/2})^{v}A^{1/2}$.
Let $f$ be a continuous function defined on an interval $I$ containing
the spectrum of $A^{-1/2}BA^{-1/2}$. Then, using the functional calculus
for continuous functions, we define $A\sharp_{f}B$ by
\[
A\sharp_{f}B=A^{1/2}f(A^{-1/2}BA^{-1/2})A^{1/2}.
\]
Utilizing the scalar relation (\ref{eq:Dragomir1}), Dragomir \cite{Dragomir}, established the following series of inequalities
\begin{equation}
\frac{1}{2}v(1-v)A\sharp_{f_{\min}}B\leq A\nabla_{v}B-A\sharp_{v}B\leq\frac{1}{2}v(1-v)A\sharp_{f_{\max}}B,\label{eq:Dra_op}
\end{equation}
where $A,B\in M_{n}^{+}$, $0\leq v\leq1$, and
\begin{eqnarray*}
f_{\min}(x) & = & \min\{1,x\}(\ln x)^{2},\\
f_{\max}(x) & = & \max\{1,x\}(\ln x)^{2},
\end{eqnarray*}
where $x>0$. Now, by virtue of our Corollary \ref{cor:Dra_better_N1} we can obtain more accurate relations than those in (\ref{eq:Dra_op}).

\begin{thm}\label{prvi}
Let  $A,B\in M_{n}^{+}$ and $0\leq v\leq1$. Then,
\[
A\nabla_{v}B\geq A\sharp_{v}B+r_{0}(v)(A+B-2A\sharp B)+\alpha(v)A\sharp_{f_{\min}}B
\]
and
\begin{eqnarray*}
A\nabla_{v}B & \leq & A+B-A\sharp_{1-v}B-r_{0}(v)(A+B-2A\sharp B)-\alpha(v)A\sharp_{f_{\min}}B,\\
 & = & A\sharp B-A\sharp_{1-v}B+R_{0}(v)(A+B-2A\sharp B)-\alpha(v)A\sharp_{f_{\min}}B,
\end{eqnarray*}
where $\alpha(v)=\frac{1}{2}v(1-v)-\frac{1}{4}r_{0}(v)$ and $f_{\min}(x)=\min\{1,x\}(\ln x)^{2}$.\end{thm}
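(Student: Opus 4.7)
The plan is to obtain this matrix inequality as the operator lift of the scalar Corollary~\ref{cor:Dra_better_N1}, following the standard ``set $a=1$, $b=x$, then apply functional calculus'' route flagged in the paragraph preceding the statement. First I would set $a=1$ and $b=x>0$ in Corollary~\ref{cor:Dra_better_N1}. With the identifications $(1-\sqrt x)^{2}=1+x-2\sqrt x$ and $\zeta(1,x)=(\ln x)^{2}\min\{1,x\}=f_{\min}(x)$, the corollary collapses into the two one-variable scalar inequalities
\[
1-v+vx \ge x^{v}+r_{0}(v)(1+x-2\sqrt{x})+\alpha(v)\,f_{\min}(x)
\]
and
\[
1-v+vx \le 1+x-x^{1-v}-r_{0}(v)(1+x-2\sqrt{x})-\alpha(v)\,f_{\min}(x),
\]
both valid for every $x>0$.

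Next I would apply continuous functional calculus at the positive operator $X:=A^{-1/2}BA^{-1/2}\in M_{n}^{+}$, whose spectrum is contained in $(0,\infty)$; each scalar inequality above then becomes an operator inequality with $X$ in place of $x$ and $I$ in place of $1$. Conjugating by $A^{1/2}>0$ preserves the operator order, and invoking the identifications $A^{1/2}XA^{1/2}=B$, $A^{1/2}X^{v}A^{1/2}=A\sharp_{v}B$, $A^{1/2}X^{1/2}A^{1/2}=A\sharp B$, and $A^{1/2}f_{\min}(X)A^{1/2}=A\sharp_{f_{\min}}B$ (the last by the definition of $A\sharp_{f}B$ introduced above), the forward inequality and the first form of the reverse inequality of the theorem drop out at once.

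For the second form of the reverse inequality I would rewrite the first-form upper bound, using $r_{0}(v)+R_{0}(v)=1$, as
\[
R_{0}(v)(A+B-2A\sharp B)+2A\sharp B-A\sharp_{1-v}B-\alpha(v)\,A\sharp_{f_{\min}}B,
\]
and then invoke the matrix arithmetic--geometric mean inequality $2A\sharp B\le A\sharp_{v}B+A\sharp_{1-v}B$, obtained by the same conjugation from the scalar relation $2\sqrt{ab}\le a^{v}b^{1-v}+a^{1-v}b^{v}$. This closing step mirrors the final move in the proof of Theorem~\ref{thm:Young_multiple}.

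I do not expect any genuine obstacle: the whole argument is a mechanical translation of Corollary~\ref{cor:Dra_better_N1} through functional calculus, and no matrix-level inequality is required beyond what is already encoded in the definition of $A\sharp_{v}B$ and the elementary operator AM--GM just cited. The only bookkeeping point requiring care is expanding $(1-\sqrt{x})^{2}=1+x-2\sqrt{x}$ at the scalar stage, since that expanded form is exactly what conjugation turns into the matrix analog $A+B-2A\sharp B$; writing $(1-\sqrt{x})^{2}$ verbatim would leave one with an ill-posed ``matrix square root of a difference'' on the operator side.
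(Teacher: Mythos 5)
Your proposal is correct and follows essentially the same route as the paper: specialize Corollary \ref{cor:Dra_better_N1} to $a=1$, $b=c$ (so that $\zeta(1,c)=f_{\min}(c)$ and $(1-\sqrt{c})^{2}=1+c-2\sqrt{c}$), apply the functional calculus at $A^{-1/2}BA^{-1/2}$, and conjugate by $A^{1/2}$. One remark: the second line of the reverse inequality is an algebraic identity rather than an AM--GM consequence --- your own rewriting $2A\sharp B-A\sharp_{1-v}B+R_{0}(v)(A+B-2A\sharp B)-\alpha(v)A\sharp_{f_{\min}}B$ \emph{is} already that identity (and incidentally shows that the ``$A\sharp B$'' in the theorem's second line should read ``$2A\sharp B$''), so the closing AM--GM step is superfluous and actually lands on a different, weaker bound $A\sharp_{v}B+R_{0}(v)(A+B-2A\sharp B)-\alpha(v)A\sharp_{f_{\min}}B$ than the one stated.
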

\begin{proof}
By Corollary \ref{cor:Dra_better_N1}, we have
\begin{eqnarray*}
 &  & 1-v+vc\geq c^{v}+r_{0}(v)(c+1-2\sqrt{c})+\alpha(v)f_{\min}(c),\\
 &  & 1-v+vc\leq1+c-c^{1-v}-r_{0}(v)(c+1-2\sqrt{c})-\alpha(v)f_{\min}(c)\\
 &  & \hphantom{1-v+vc}=2\sqrt{c}-c^{1-v}+R_{0}(v)(c+1-2\sqrt{c})-\alpha(v)f_{\min}(c)
\end{eqnarray*}
for $c>0$ and $0\leq v\leq1$. Now, substituting $c$ by $A^{-1/2}BA^{-1/2}$
and multiplying each inequality by $A^{1/2}$ both-sidedly, which preserves operator order, we obtain desired relations.
\end{proof}

Next, we give the matrix interpretation of  Theorem \ref{thm:D_thm}.

\begin{thm}\label{drugi}
Let $A,B\in M_{n}^{+}$ and $0\leq v\leq1$. Define $G_{n,k}(A,B)$
by
\[
G_{n,k}(A,B)=A\sharp_{(k-1)/2^{n-1}}B+A\sharp_{k/2^{n-1}}B-2A\sharp_{(2k-1)/2^{n}}B.
\]

\begin{enumerate}
\item If $0\leq v\leq\frac{1}{2}$, then
\begin{eqnarray*}
(1-v)A+vB & \geq & \frac{1}{2}(A\sharp_{2v}B+A)\\
 &  & +\frac{1}{2}\sum_{n=1}^{N-1}r_{n}(v)\sum_{k=1}^{2^{n-1}}G_{n,k}(A,B)\chi_{(\frac{k-1}{2^{n}},\frac{k}{2^{n}})}(v),\\
(1-v)A+vB & \leq & A+\frac{1}{2}B-\frac{1}{2}A\sharp_{1-2v}B\\
 &  & -\frac{1}{2}\sum_{n=1}^{N-1}r_{n}(v)\sum_{k=1}^{2^{n-1}}G_{n,k}(B,A)\chi_{(\frac{k-1}{2^{n}},\frac{k}{2^{n}})}(v).
\end{eqnarray*}

\item If $\frac{1}{2}<v\leq1$, then
\begin{eqnarray*}
(1-v)A+vB & \geq & \frac{1}{2}(A\sharp_{2v-1}B+B)\\
 &  & +\frac{1}{2}\sum_{n=1}^{N-1}r_{n}(v)\sum_{k=1}^{2^{n-1}}G_{n,k}(A,B)\chi_{(\frac{k-1}{2^{n}},\frac{k}{2^{n}})}(v-\frac{1}{2}),\\
(1-v)A+vB & \leq & \frac{1}{2}A+B-\frac{1}{2}A\sharp_{2-2v}B\\
 &  & -\frac{1}{2}\sum_{n=1}^{N-1}r_{n}(v)\sum_{k=1}^{2^{n-1}}G_{n,k}(B,A)\chi_{(\frac{k-1}{2^{n}},\frac{k}{2^{n}})}(v-\frac{1}{2}).
\end{eqnarray*}

\end{enumerate}
\end{thm}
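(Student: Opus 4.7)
The plan is to follow the same template used in the proof of Theorem~\ref{prvi}: start from the scalar inequalities of Theorem~\ref{thm:D_thm} specialized to $(a,b)=(1,c)$ for $c>0$, then substitute $c\mapsto X:=A^{-1/2}BA^{-1/2}$ via continuous functional calculus, and finally multiply both sides by $A^{1/2}$. The spectrum of $X$ lies in $(0,\infty)$ since $A,B\in M_n^+$, so the functional calculus step transfers every pointwise scalar inequality to the corresponding operator inequality, while the concluding congruence by $A^{1/2}$ preserves operator order. Two identities drive the identification of the resulting expressions with the quantities in the statement: the definition $A^{1/2}X^{t}A^{1/2}=A\sharp_{t}B$ and the standard transposition $A\sharp_{1-t}B=B\sharp_{t}A$.

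For the lower bound in Part~(1), specializing the scalar lower bound of Theorem~\ref{thm:D_thm}(1) to $(a,b)=(1,c)$ gives
\[
(1-v)+vc \;\ge\; \tfrac{1}{2}(c^{2v}+1)+\tfrac{1}{2}\sum_{n=1}^{N-1}r_{n}(v)\sum_{k=1}^{2^{n-1}}\Delta_{g_{1,c}}(n,k)\chi_{(\frac{k-1}{2^{n}},\frac{k}{2^{n}})}(v),
\]
where a direct unpacking yields $\Delta_{g_{1,c}}(n,k)=c^{(k-1)/2^{n-1}}+c^{k/2^{n-1}}-2c^{(2k-1)/2^{n}}$. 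Substituting $c\mapsto X$ and conjugating by $A^{1/2}$ converts $(1-v)+vc$ into $(1-v)A+vB$, the term $c^{2v}$ into $A\sharp_{2v}B$, and each $\Delta_{g_{1,c}}(n,k)$ into precisely $G_{n,k}(A,B)$. For the reverse inequality of Part~(1) one instead encounters finite differences of $g_{c,1}(v)=c^{1-2v}$; after the same substitution and congruence these produce terms of the form $A\sharp_{1-t}B$, which the transposition identity rewrites as $B\sharp_{t}A$, giving $G_{n,k}(B,A)$ as required. Part~(2) is entirely parallel, now working on $[\tfrac12,1]$: the scalars $c^{2v-1}$ and $c^{2-2v}$ become $A\sharp_{2v-1}B$ and $A\sharp_{2-2v}B$, respectively, while the argument shift $v\mapsto v-\tfrac12$ in the characteristic functions is simply inherited from the scalar statement.

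I do not foresee any substantive obstacle beyond bookkeeping: one must only verify that the dyadic exponents appearing in $\Delta_{g_{1,c}}(n,k)$ and $\Delta_{g_{c,1}}(n,k)$ align precisely with those in the definition of $G_{n,k}(A,B)$ and $G_{n,k}(B,A)$, and that the index shift in Part~(2) transfers cleanly through the characteristic functions. Everything else is the routine passage from scalar to operator inequalities via the continuous functional calculus, exactly as already illustrated in the proof of Theorem~\ref{prvi}.
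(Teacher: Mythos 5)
Your proposal is correct and follows essentially the same route as the paper: the authors likewise specialize Theorem \ref{thm:D_thm} to $(a,b)=(1,c)$, substitute $c\mapsto A^{-1/2}BA^{-1/2}$ by the functional calculus, and conjugate by $A^{1/2}$, with the identification $A\sharp_{1-t}B=B\sharp_{t}A$ handling the $G_{n,k}(B,A)$ terms exactly as you describe.
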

\begin{proof}
Let $c>0$. Taking into account Theorem \ref{thm:D_thm} with $0\leq v\leq\frac{1}{2}$, we have
\begin{eqnarray*}
(1-v)+vc & \geq & \frac{1}{2}(c^{2v}+1)+\frac{1}{2}\sum_{n=1}^{N-1}r_{n}(v)\sum_{k=1}^{2^{n-1}}\Delta_{g_{1,c}}(n,k)\chi_{(\frac{k-1}{2^{n}},\frac{k}{2^{n}})}(v),\\
(1-v)+vc & \leq & 1+\frac{1}{2}c-\frac{1}{2}c^{1-2v}-\frac{1}{2}\sum_{n=1}^{N-1}r_{n}(v)\sum_{k=1}^{2^{n-1}}\Delta_{g_{c,1}}(n,k)\chi_{(\frac{k-1}{2^{n}},\frac{k}{2^{n}})}(v),
\end{eqnarray*}
where
\begin{eqnarray*}
\Delta g_{1,c}(n,k) & = & c^{(k-1)/2^{n-1}}+c^{k/2^{n-1}}-2c^{(2k-1)/2^{n}},\\
\Delta g_{c,1}(n,k) & = & c^{1-(k-1)/2^{n-1}}+c^{1-k/2^{n-1}}-2c^{1-(2k-1)/2^{n}}.
\end{eqnarray*}
Now,  the desired inequalities follow by substituting $c$ by $A^{-1/2}BA^{-1/2}$
and multiplying  each inequality by $A^{1/2}$ both-sidedly. The same conclusion can be drawn for the case $\frac{1}{2}<v\leq1$. We omit the detailed proof.
\end{proof}

The rest of this section will be dedicated to improving some important matrix inequalities known from the literature.
First, we deal with Heinz-type inequalities.
For $0\leq v\leq1$, the Heinz mean in parameter $v$ is defined by
\[
H_{v}(a,b)=\frac{a^{1-v}b^{v}+a^{v}b^{1-v}}{2},\qquad a,b>0.
\]
The Heinz mean is convex on $[0,1]$, as a function of variable $v$  and attains
its minimum value at $v=1/2$. Thus, the Heinz mean interpolates between the geometric mean
and the arithmetic mean, that is,
\[
\sqrt{ab}\leq H_{v}(a,b)\leq\frac{a+b}{2}.
\]
Similarly, it is easy to see that for any $A,B\in M_{n}^{+}$ holds relation
\begin{equation}
A\sharp B\leq H_{v}(A,B)\leq A\nabla B,\label{eq:Heinz_Arith}
\end{equation}
where
\[
H_{v}(A,B)=\frac{A\sharp_{v}B+A\sharp_{1-v}B}{2}.
\]
Now, by virtue of Theorem \ref{lem:convex_lemma}, we can improve the second inequality in (\ref{eq:Heinz_Arith}).
\begin{thm}\label{treci}
Let $A,B\in M_{n}^{+}$. If $N$ is a nonnegative integer, then
\begin{equation*}
\begin{split}
&H_{v}(A,B)\leq A\nabla B\\
 &\qquad -\sum_{n=0}^{N-1}r_{n}(v)\sum_{k=1}^{2^{n}}\left(H_{\frac{k-1}{2^{n}}}(A,B)+H_{\frac{k}{2^{n}}}(A,B)-2H_{\frac{2k-1}{2^{n+1}}}(A,B)\right)\chi_{(\frac{k-1}{2^{n}},\frac{k}{2^{n}})}(v).
\end{split}
\end{equation*}
\end{thm}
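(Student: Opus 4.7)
The plan is to deduce the statement from the scalar version of Theorem \ref{lem:convex_lemma} applied to a suitable convex function of one scalar parameter $c>0$, and then transport the resulting inequality to the matrix setting by the standard substitution $c\mapsto A^{-1/2}BA^{-1/2}$ followed by conjugation with $A^{1/2}$.

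First, I would fix $c>0$ and introduce
\[
f(v)=H_{v}(1,c)=\tfrac{1}{2}\bigl(c^{v}+c^{1-v}\bigr),\qquad 0\leq v\leq 1.
\]
A direct differentiation gives $f''(v)=\tfrac{1}{2}(\ln c)^{2}(c^{v}+c^{1-v})\geq 0$, so $f$ is convex on $[0,1]$. Moreover $f(0)=f(1)=\tfrac{1}{2}(1+c)$, which is the key simplification: the left-hand side of the inequality in Theorem \ref{lem:convex_lemma} becomes
\[
(1-v)f(0)+vf(1)=\tfrac{1}{2}(1+c),
\]
independent of $v$. Applying Theorem \ref{lem:convex_lemma} to this $f$ therefore yields
\[
H_{v}(1,c)\leq\tfrac{1}{2}(1+c)-\sum_{n=0}^{N-1}r_{n}(v)\sum_{k=1}^{2^{n}}\Delta_{f}(n,k)\chi_{(\frac{k-1}{2^{n}},\frac{k}{2^{n}})}(v),
\]
where by definition
\[
\Delta_{f}(n,k)=H_{\frac{k-1}{2^{n}}}(1,c)+H_{\frac{k}{2^{n}}}(1,c)-2H_{\frac{2k-1}{2^{n+1}}}(1,c).
\]

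Next, I would lift this scalar inequality to operators by functional calculus. Substituting $c\mapsto A^{-1/2}BA^{-1/2}$, which is a positive definite matrix, the inequality remains valid as an operator inequality since $f$ was nonnegative on the spectrum. Multiplying by $A^{1/2}$ on both sides (an order-preserving operation), the identity
\[
A^{1/2}H_{v}\!\bigl(1,A^{-1/2}BA^{-1/2}\bigr)A^{1/2}=\tfrac{1}{2}\bigl(A\sharp_{v}B+A\sharp_{1-v}B\bigr)=H_{v}(A,B)
\]
turns the left-hand side into $H_{v}(A,B)$, while the constant term becomes $\tfrac{1}{2}(A+B)=A\nabla B$. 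The same identity applied at the parameters $\tfrac{k-1}{2^{n}}$, $\tfrac{k}{2^{n}}$, $\tfrac{2k-1}{2^{n+1}}$ transforms $\Delta_{f}(n,k)$ into
\[
H_{\frac{k-1}{2^{n}}}(A,B)+H_{\frac{k}{2^{n}}}(A,B)-2H_{\frac{2k-1}{2^{n+1}}}(A,B),
\]
which is precisely the coefficient appearing in the statement.

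I do not expect a real obstacle here: convexity of $f$ is immediate, the two endpoint values coincide so the affine upper bound of Theorem \ref{lem:convex_lemma} collapses to a constant, and the conjugation $X\mapsto A^{1/2}XA^{1/2}$ interacts cleanly with the weighted geometric mean. The only point worth being explicit about is that conjugating by $A^{1/2}$ preserves the operator order, which justifies passing the inequality term-by-term through the finite sum. With these ingredients in place the conclusion follows.
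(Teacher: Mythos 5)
Your proposal is correct and follows essentially the same route as the paper: apply Theorem \ref{lem:convex_lemma} to the convex scalar function $f(v)=H_{v}(1,c)=\tfrac{1}{2}(c^{v}+c^{1-v})$, use $f(0)=f(1)$ to collapse the affine upper bound to the constant $\tfrac{1}{2}(1+c)$, and then pass to matrices by substituting $c\mapsto A^{-1/2}BA^{-1/2}$ and conjugating by $A^{1/2}$. The only difference is that you spell out the convexity computation and the identity $A^{1/2}H_{v}(1,A^{-1/2}BA^{-1/2})A^{1/2}=H_{v}(A,B)$, which the paper leaves implicit.
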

\begin{proof}
Let $c>0$. Since $f(v)=H_{v}(1,c)=(c^{v}+c^{1-v})/2$ is convex on
$[0,1]$, we have
\[
f(v)\leq f(0)-\sum_{n=0}^{N-1}r_{n}(v)\sum_{k=1}^{2^{n}}\Delta_{f}(n,k)\chi_{(\frac{k-1}{2^{n}},\frac{k}{2^{n}})}(v)
\]
by Theorem \ref{lem:convex_lemma}. By the functional calculus, we can replace $c$ by $A^{-1/2}BA^{-1/2}$.
Then, multiplying the obtained inequality by $A^{1/2}$ both-sidedly, we obtain the desired inequality.
\end{proof}
Note that the second inequality in (\ref{eq:Heinz_Arith}) follows
from the above theorem with $N=0$. Moreover, if $N=1$, we have
\[
H_{v}(A,B)\leq(1-2r_{0}(v))A\nabla B+2r_{0}(v)A\sharp B
\]
for all $0\leq v\leq1$, which was proved in \cite{Kittaneh_Krnic}.

Kittaneh \cite{Kittaneh}, showed that if
 $A,B\in M_{n}^{+}$, $X\in M_{n}$, and $0\leq v\leq1$, then
\begin{eqnarray}
|||A^{1-v}XB^{v}+A^{v}XB^{1-v}||| & \leq & 4r_{0}(v)|||A^{1/2}XB^{1/2}|||\label{eq:Kittaneh_1}\\
 &  & +(1-2r_{0}(v))|||AX+XB|||.\nonumber
\end{eqnarray}
This Heinz-type inequality for unitarily invariant norms can be improved as follows.
\begin{thm}
\label{thm:Kittaneh_ext}Let $A,B\in M_{n}^{+}$ and $X\in M_{n}$.
If $0\leq v\leq1$, then
\begin{eqnarray*}
 &  & |||A^{1-v}XB^{v}+A^{v}XB^{1-v}|||\leq|||AX+XB|||\\
 &  & \hphantom{aaaaaaaaa}-\sum_{n=0}^{N-1}r_{n}(v)\sum_{k=1}^{2^{n}}\Delta_{f}(n,k)\chi_{(\frac{k-1}{2^{n}},\frac{k}{2^{n}})}(v),
\end{eqnarray*}
where $f(v)=|||A^{1-v}XB^{v}+A^{v}XB^{1-v}|||$. \end{thm}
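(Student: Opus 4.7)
The plan is to recognize that Theorem \ref{thm:Kittaneh_ext} is nothing more than a direct application of inequality (\ref{eq:f_convex_ineq}) from Theorem \ref{lem:convex_lemma} to the specific function $f(v)=|||A^{1-v}XB^{v}+A^{v}XB^{1-v}|||$, once two simple facts about $f$ are in place. First I would verify the boundary symmetry: evaluating at the endpoints gives $f(0)=|||AX+XB|||$ and $f(1)=|||XB+AX|||=|||AX+XB|||$, so
\[
(1-v)f(0)+vf(1)=|||AX+XB|||
\]
for every $v\in[0,1]$. Consequently, once (\ref{eq:f_convex_ineq}) is applied to $f$, its left-hand side collapses to the desired scalar $|||AX+XB|||$, and a single transposition produces exactly the inequality claimed in the theorem.

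The second, and only nontrivial, ingredient is that $f$ is convex on $[0,1]$. This is the classical norm Heinz inequality of Bhatia and Davis, which asserts that for $A,B\in M_{n}^{+}$, $X\in M_{n}$ and any unitarily invariant norm, the function $v\mapsto|||A^{1-v}XB^{v}+A^{v}XB^{1-v}|||$ is convex on $[0,1]$ and symmetric about $v=1/2$. I would simply invoke this result; it immediately furnishes $\Delta_{f}(n,k)\geq0$ for every admissible pair $(n,k)$, so the hypothesis of Theorem \ref{lem:convex_lemma} is met. Since the theorem actually allows the more general ``piecewise convex with nonnegative $\Delta_{f}$'' setting, full convexity of $f$ on $[0,1]$ is even more than we need.

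With those two observations in hand, the proof is essentially one line: apply (\ref{eq:f_convex_ineq}) to $f$ and subtract $f(v)$ from both sides. No passage from scalar to matrix inequalities is required here, because the function $f$ is scalar-valued from the outset (the norm has already absorbed the operator structure), so unlike in Theorems \ref{prvi}, \ref{drugi} and \ref{treci} we do not need the functional calculus or the substitution $c\mapsto A^{-1/2}BA^{-1/2}$.

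The main obstacle, therefore, is only conceptual: identifying the correct convex scalar function of $v$ to which Theorem \ref{lem:convex_lemma} should be applied, and recalling that the Bhatia--Davis convexity theorem supplies the required convexity. Once this reduction is made, the proof is immediate and requires no calculation beyond checking the endpoint values of $f$.
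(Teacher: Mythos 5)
Your proposal is correct and follows exactly the paper's own argument: the paper likewise applies Theorem \ref{lem:convex_lemma} directly to the scalar function $f(v)=|||A^{1-v}XB^{v}+A^{v}XB^{1-v}|||$, whose convexity on $[0,1]$ it cites from \cite[Corollary IX.4.10]{Bhatia}, with the endpoint values collapsing $(1-v)f(0)+vf(1)$ to $|||AX+XB|||$. No difference in substance.
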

\begin{proof}
It follows from  Theorem \ref{lem:convex_lemma} since the function $f(v)=|||A^{1-v}XB^{v}+A^{v}XB^{1-v}|||$ is convex on $[0,1]$
(for more details, see \cite[Corollary IX.4.10]{Bhatia}).
\end{proof}
\noindent Considering the above theorem for $N=0$, we obtain the well-known Heinz inequality
\[
|||A^{1-v}XB^{v}+A^{v}XB^{1-v}|||\leq|||AX+XB|||,
\]
while for $N=1$, we have
\[
|||A^{1-v}XB^{v}+A^{v}XB^{1-v}|||\leq|||AX+XB|||-2r_{0}(v)\big(|||AX+XB|||-2|||A^{1/2}XB^{1/2}|||\big)
\]
which is simply (\ref{eq:Kittaneh_1}).

Now, consider the following relation that interpolates the matrix Cauchy-Schwartz inequality \cite[Corollary 4.31]{Zhan}:
\begin{eqnarray*}
|||\,|A^{1/2}XB^{1/2}|^{t}|||^{2} & \leq & |||\,|A^{1-v}XB^{v}|^{t}|||\cdot|||\,|A^{v}XB^{1-v}|^{t}|||\\
 & \leq & |||\,|AX|^{t}|||\cdot|||\,|XB|^{t}|||,
\end{eqnarray*}
where $A,B\in M_{n}^{+}$, $X\in M_{n}$, and
$t>0$. This series of inequalities can be improved as follows.
\begin{thm}
\label{thm:Zhan_improved}Let $A,B\in M_{n}^{+}$, $X\in M_{n}$,
and $N$ be a nonnegative integer. If $t>0$ and $0\leq v\leq1$, then
\begin{eqnarray*}
|||\,|A^{1-v}XB^{v}|^{t}|||\cdot|||\,|A^{v}XB^{1-v}|^{t}||| & \leq & |||\,|AX|^{t}|||\cdot|||\,|XB|^{t}|||\\
 &  & -\sum_{n=0}^{N-1}r_{n}(v)\sum_{k=1}^{2^{n}}\Delta_{f}(n,k)\chi_{(\frac{k-1}{2^{n}},\frac{k}{2^{n}})}(v),
\end{eqnarray*}
where $f(v)=|||\,|A^{1-v}XB^{v}|^{t}|||\cdot|||\,|A^{v}XB^{1-v}|^{t}|||$.\end{thm}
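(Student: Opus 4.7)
The plan is to mimic the short proof of Theorem \ref{thm:Kittaneh_ext}: verify that $f$ is convex on $[0,1]$, observe that its boundary values coincide with the leading term of the target bound, and invoke the refinement (\ref{eq:f_convex_ineq}) from Theorem \ref{lem:convex_lemma}.

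First I would compute the endpoints. Since $A^{0}=B^{0}=I$, direct substitution gives $f(0)=f(1)=|||\,|AX|^{t}|||\cdot|||\,|XB|^{t}|||$, so the linear interpolant $(1-v)f(0)+vf(1)$ collapses to this constant. That is precisely the leading term on the right-hand side of the inequality to be established.

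Next I would show that $f$ is convex on $[0,1]$. Writing $f(v)=g(v)\,g(1-v)$ with $g(v)=|||\,|A^{1-v}XB^{v}|^{t}|||$, it suffices to show that $g$ is log-convex on $[0,1]$, for then $g(1-v)$ is log-convex as well, the product of two log-convex functions is log-convex, and every log-convex function is convex. The log-convexity of $g$ for an arbitrary unitarily invariant norm and arbitrary $t>0$ is a classical Heinz-type statement; it is precisely the content underlying the matrix Cauchy--Schwarz inequality \cite[Corollary 4.31]{Zhan}, proved by applying Hadamard's three lines theorem to a suitable bounded analytic extension. With convexity of $f$ in hand, (\ref{eq:f_convex_ineq}) applied to $f$ immediately yields the claim after substituting the boundary-value computation.

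The only nontrivial step is the log-convexity of $g$; once it is granted, the rest of the proof is a mechanical application of our refinement of Jensen's inequality. A fully self-contained derivation could be obtained by adapting the three lines argument to each unitarily invariant norm via the Ky Fan dominance principle, but for the purposes of this paper it suffices to cite the existing result.
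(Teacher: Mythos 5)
Your proposal is correct and follows the paper's own route: verify that $f(0)=f(1)=|||\,|AX|^{t}|||\cdot|||\,|XB|^{t}|||$, establish convexity of $f$ on $[0,1]$, and apply inequality (\ref{eq:f_convex_ineq}) of Theorem \ref{lem:convex_lemma}. The only cosmetic difference is that the paper cites the convexity of the product $f$ directly from \cite[Theorem 4.30]{Zhan}, whereas you rederive it from the log-convexity of the single factor $g(v)=|||\,|A^{1-v}XB^{v}|^{t}|||$; this is the same underlying result and your factorization argument is valid.
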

\begin{proof}
It follows from Theorem \ref{lem:convex_lemma} since  $f(v)=|||\,|A^{1-v}XB^{v}|^{t}|||\cdot|||\,|A^{v}XB^{1-v}|^{t}|||$
is convex on $[0,1]$ (see \cite[Theorem 4.30]{Zhan}).
\end{proof}
\noindent In particular, if $N=1$ the above theorem reduces to
\begin{eqnarray*}
|||\,|A^{1-v}XB^{v}|^{t}|||\cdot|||\,|A^{v}XB^{1-v}|^{t}||| & \leq & (1-2r_{0}(v))|||\,|AX|^{t}|||\cdot|||\,|XB|^{t}|||\\
 &  & +2r_{0}(v)|||\,|A^{1/2}XB^{1/2}|^{t}|||^{2},
\end{eqnarray*}
where $0\leq v\leq1$.

Similarly to the previous theorem, we can also utilize  convexity  of a function $f(v)=|||\,|A^{v}XB^{v}|^{t}|||\cdot|||\,|A^{-v}XB^{-v}|^{t}|||$
on the interval $[-1,1]$ (for more details, see \cite[Corollary 4.32]{Zhan}).
\begin{thm}
Let $A,B\in M_{n}^{+}$, $X\in M_{n}$,
and $N$ be a nonnegative integer. If $t>0$ and $-1\leq v\leq1$, then
\begin{eqnarray*}
|||\,|A^{v}XB^{v}|^{t}|||\cdot|||\,|A^{-v}XB^{-v}|^{t}||| & \leq & |||\,|AXB|^{t}|||\cdot|||\,|A^{-1}XB^{-1}|^{t}|||\\
 &  & -\sum_{n=0}^{N-1}s_{n}(v)\sum_{k=1-2^{n-1}}^{2^{n-1}}\Delta_{f}(n,k)\chi_{(\frac{k-1}{2^{n-1}},\frac{k}{2^{n-1}})}(v),
\end{eqnarray*}
where $f(v)=|||\,|A^{v}XB^{v}|^{t}|||\cdot|||\,|A^{-v}XB^{-v}|^{t}|||$
and $s_{n}(v)=r_{n}(\frac{v+1}{2})$.\end{thm}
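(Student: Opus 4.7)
The plan is to reduce the statement to the already proved Theorem \ref{lem:convex_lemma} by an affine change of variables that carries the interval $[-1,1]$ onto $[0,1]$. To that end, set $g(u)=f(2u-1)$ for $u\in[0,1]$. Since $f$ is convex on $[-1,1]$ by \cite[Corollary 4.32]{Zhan} and affine reparametrization preserves convexity, $g$ is convex on $[0,1]$. Moreover, the two factors in the definition of $f(v)$ simply swap roles when $v$ is replaced by $-v$, so $f$ is an even function; in particular
\[
g(0)=f(-1)=f(1)=g(1)=|||\,|AXB|^{t}|||\cdot|||\,|A^{-1}XB^{-1}|^{t}|||,
\]
which is precisely the constant appearing on the right-hand side of the claimed inequality.

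Next, I would apply the first inequality of Theorem \ref{lem:convex_lemma}, that is (\ref{eq:f_convex_ineq}), to $g$. Rearranged, it yields
\[
g(u)\leq(1-u)g(0)+ug(1)-\sum_{n=0}^{N-1}r_{n}(u)\sum_{k=1}^{2^{n}}\Delta_{g}(n,k)\chi_{(\frac{k-1}{2^{n}},\frac{k}{2^{n}})}(u).
\]
Because $g(0)=g(1)$, the affine combination collapses to the single constant above.

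It remains to translate the resulting inequality back to the variable $v=2u-1$. By the very definition of $s_{n}$ we have $r_{n}(u)=r_{n}(\tfrac{v+1}{2})=s_{n}(v)$. The condition $\frac{k-1}{2^{n}}<u<\frac{k}{2^{n}}$ rewrites as $\frac{k-1}{2^{n-1}}-1<v<\frac{k}{2^{n-1}}-1$, and the substitution $j=k-2^{n-1}$ reindexes the inner sum from $k\in\{1,\dots,2^{n}\}$ to $j\in\{1-2^{n-1},\dots,2^{n-1}\}$, while simultaneously turning the characteristic function into $\chi_{(\frac{j-1}{2^{n-1}},\frac{j}{2^{n-1}})}(v)$. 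A direct computation shows
\[
\Delta_{g}(n,k)=f\!\left(\tfrac{j-1}{2^{n-1}}\right)+f\!\left(\tfrac{j}{2^{n-1}}\right)-2f\!\left(\tfrac{2j-1}{2^{n}}\right),
\]
which is exactly $\Delta_{f}(n,j)$ in the rescaled convention used in the theorem.

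There is no genuinely hard step: the convexity input comes from \cite{Zhan}, and the remainder is accounting induced by an affine change of variable. The only point requiring care is the bookkeeping of the shifted index set $\{1-2^{n-1},\ldots,2^{n-1}\}$ and the rescaled partition of $[-1,1]$; in particular the case $n=0$ is to be interpreted as the single term corresponding to the whole interval $(-1,1)$ with midpoint $0$, consistently with the symmetric convention $g(0)+g(1)-2g(\tfrac{1}{2})=f(-1)+f(1)-2f(0)$.
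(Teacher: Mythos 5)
Your proposal is correct and follows essentially the same route as the paper: apply Theorem \ref{lem:convex_lemma} to $g(u)=f(2u-1)$ using the convexity from \cite[Corollary 4.32]{Zhan}, then undo the affine substitution, reindex the inner sum by a shift of $2^{n-1}$, and identify $\Delta_{g}(n,k)$ with the rescaled $\Delta_{f}$. Your explicit observation that $f$ is even (so the affine term collapses to the constant $f(1)=f(-1)$) is a point the paper leaves implicit, but the arguments coincide.
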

\begin{proof}
Applying Theorem \ref{lem:convex_lemma} to $g(v)=f(2v-1)$, $0\leq v\leq1$, we have
\begin{equation}
f(2v-1)\leq(1-v)f(-1)+vf(1)-\sum_{n=0}^{N-1}r_{n}(v)\sum_{k=1}^{2^{n}}\Delta_{g}(n,k)\chi_{(\frac{k-1}{2^{n}},\frac{k}{2^{n}})}(v)\label{eq:g_to_f}.
\end{equation}
Now, since
\[
\Delta_{g}(n,k)=f(\frac{k-1}{2^{n-1}}-1)+f(\frac{k}{2^{n-1}}-1)-2f(\frac{2k-1}{2^{n}}-1),
\]
replacing $v$ by $\frac{v+1}{2}$ and $k$ by $2^{n-1}-k$ in (\ref{eq:g_to_f}), we obtain
\[
f(v)\leq\frac{1-v}{2}f(-1)+\frac{1+v}{2}f(1)-\sum_{n=0}^{N-1}s_{n}(v)\sum_{k=1-2^{n-1}}^{2^{n-1}}\Delta_{f}(n,k)\chi_{(\frac{k-1}{2^{n-1}},\frac{k}{2^{n-1}})}(v),
\]
which represents the desired inequality.
\end{proof}
\noindent In particular, if $N=1$ the above result reduces to
\begin{eqnarray*}
|||\,|A^{v}XB^{v}|^{t}|||\cdot|||\,|A^{-v}XB^{-v}|^{t}||| & \leq & (1-2s_{0}(v))|||\,|A^{-1}XB^{-1}|^{t}|||\cdot|||\,|AXB|^{t}|||\\
 &  & +2s_{0}(v)|||\,|X|^{t}|||^{2},
\end{eqnarray*}
where $-1\leq v\leq1$.

To conclude the paper, we will  improve the following inequality involving positive definite matrices and arithmetic mean (see \cite[pp. 554-555]{H_J}):
\begin{equation}
(A\nabla_{v}B)^{-1}\leq A^{-1}\nabla_{v}B^{-1},\label{eq:inverse_eq}
\end{equation}
where $A,B\in M_{n}^{+}$ and $0\leq v\leq1$. This inequality can also be refined by virtue of Theorem \ref{lem:convex_lemma}.
\begin{thm}
If $A,B\in M_{n}^{+}$ and $0\leq v\leq1$, then
\[
(A\nabla_{v}B)^{-1}\leq A^{-1}\nabla_{v}B^{-1}-\sum_{n=0}^{N-1}r_{n}(v)\sum_{k=1}^{2^{n}}F_{n,k}(A,B)\chi_{(\frac{k-1}{2^{n}},\frac{k}{2^{n}})}(v),
\]
where
\[
F_{n,k}(A,B)=(A\nabla_{\frac{k-1}{2^{n}}}B)^{-1}+(A\nabla_{\frac{k}{2^{n}}}B)^{-1}-2(A\nabla_{\frac{2k-1}{2^{n+1}}}B)^{-1}.
\]
\end{thm}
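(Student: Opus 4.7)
The plan is to apply Theorem \ref{lem:convex_lemma} to the matrix-valued function
\[
\varphi(v) = (A\nabla_v B)^{-1} = \bigl((1-v)A + vB\bigr)^{-1}, \qquad 0\leq v \leq 1,
\]
with $A,B\in M_n^+$ held fixed. Note that $\varphi(0) = A^{-1}$, $\varphi(1) = B^{-1}$, and $\Delta_{\varphi}(n,k) = F_{n,k}(A,B)$ by construction, so the desired conclusion is exactly inequality (\ref{eq:f_convex_ineq}) applied to $\varphi$, rearranged.

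First I would establish that $\varphi$ is operator convex in the variable $v$. For $v_1,v_2\in[0,1]$ and $t\in[0,1]$, the convex combination of arguments
\[
(1-t)\bigl((1-v_1)A+v_1B\bigr) + t\bigl((1-v_2)A+v_2B\bigr) = \bigl(1 - ((1-t)v_1+tv_2)\bigr)A + \bigl((1-t)v_1+tv_2\bigr)B
\]
lies in $M_n^+$, and the operator convexity of $t\mapsto t^{-1}$ on $(0,\infty)$ immediately yields
\[
\varphi\bigl((1-t)v_1 + tv_2\bigr) \leq (1-t)\varphi(v_1) + t\varphi(v_2).
\]

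Next I would transfer Theorem \ref{lem:convex_lemma} to this operator-valued setting. The cleanest route is to fix an arbitrary unit vector $x\in\mathbb{C}^n$ and consider the scalar function $g_x(v) = \langle x,\varphi(v)x\rangle$. Operator convexity of $\varphi$ implies scalar convexity of $g_x$ on $[0,1]$, so Theorem \ref{lem:convex_lemma} applies and gives
\[
g_x(v) \leq (1-v)g_x(0) + v g_x(1) - \sum_{n=0}^{N-1} r_n(v)\sum_{k=1}^{2^n}\Delta_{g_x}(n,k)\chi_{(\frac{k-1}{2^n},\frac{k}{2^n})}(v).
\]
Since $\Delta_{g_x}(n,k) = \langle x, F_{n,k}(A,B)x\rangle$ and $(1-v)g_x(0)+vg_x(1) = \langle x,(A^{-1}\nabla_v B^{-1})x\rangle$, letting $x$ range over all unit vectors promotes this scalar inequality to the claimed operator inequality.

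There is essentially no obstacle beyond verifying that the scalar machinery of Section 2 lifts; Lemma \ref{lem:varphi_interp} is a purely linear identity that holds for functions valued in any vector space, and operator convexity provides exactly the comparison needed to replicate the inductive step in the proof of Theorem \ref{lem:convex_lemma}. The only item requiring care is making sure the characteristic-function bookkeeping commutes with the operator ordering, which is automatic since the indicators are scalar and the coefficients $r_n(v)$ are nonnegative.
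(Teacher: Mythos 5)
Your proof is correct, but it takes a genuinely different route from the paper's. The paper never needs operator convexity of $v\mapsto(A\nabla_{v}B)^{-1}$: it applies Theorem \ref{lem:convex_lemma} to the scalar convex function $f(v)=(1-v+vc)^{-1}$ for each fixed $c>0$, then substitutes $c\mapsto A^{-1/2}BA^{-1/2}$ by the functional calculus and conjugates by a power of $A$; since every term in the resulting inequality is a function of the single matrix $A^{-1/2}BA^{-1/2}$, the scalar inequality on its spectrum transfers directly to the operator order. This is the same transfer principle used in Theorems \ref{prvi}, \ref{drugi}, and \ref{treci}, and it requires nothing beyond elementary one-variable calculus. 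Your argument instead establishes operator convexity of $\varphi(v)=((1-v)A+vB)^{-1}$ via the operator convexity of $t\mapsto t^{-1}$ on $(0,\infty)$, compresses to the quadratic forms $g_{x}(v)=\langle x,\varphi(v)x\rangle$, and applies the scalar theorem there; passing back from ``for all unit vectors $x$'' to the operator inequality is indeed immediate because the coefficients $r_{n}(v)$ and the indicators are scalars. Your route is in a sense more robust: it would refine the Jensen inequality along the segment for any operator convex function of $(1-v)A+vB$, without needing all terms to be simultaneously diagonalizable. The trade-off is that your key input --- operator convexity of inversion --- is precisely the unrefined inequality (\ref{eq:inverse_eq}) applied to arbitrary points of the segment, so you are bootstrapping the refinement from the classical base case, whereas the paper derives everything from scalar convexity alone. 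Both are complete proofs.
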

\begin{proof}
Let $c>0$. Applying Theorem \ref{lem:convex_lemma} to the convex function
$f(v)=(1-v+vc)^{-1}$, we have
\begin{eqnarray*}
(1-v+vc)^{-1} & \leq & 1-v+vc^{-1}-\sum_{n=0}^{N-1}r_{n}(v)\sum_{k=1}^{2^{n}}\Delta_{f}(n,k)\chi_{(\frac{k-1}{2^{n}},\frac{k}{2^{n}})}(v).
\end{eqnarray*}
Now, the result follows by the functional calculus as in Theorems \ref{prvi}, \ref{drugi}, and \ref{treci}.
\end{proof}
\noindent If $N=0$, the above theorem reduces to  inequality (\ref{eq:inverse_eq}), while for $N=1$ we obtain relation
\[
(A\nabla_{v}B)^{-1}\leq A^{-1}\nabla_{v}B^{-1}-2r_{0}(v)\left(A^{-1}\nabla B^{-1}-(A\nabla B)^{-1}\right),
\]
where $0\leq v\leq1$.

\end{document}